\author{Robert Bogucki}
\address{Institute of Mathematics, University of Warsaw, ul. Banacha 2, 02-097 Warszawa, Poland}
\email{r.a.bogucki@gmail.com}
\newtheorem{thm}{Theorem}[section]
\newtheorem{wn}[thm]{Corollary}
\newtheorem{lem}[thm]{Lemma}
\theoremstyle{definition}
\newtheorem*{defi}{Definition}
\newtheorem{rem}{Remark}[section]
\theoremstyle{remark}
\def\g{\gamma}
\def\E{\mathbf{E}}
\def\P{\mathbf{P}}
\def\R{\mathbb{R}}
\def\1{\mathbbm{1}}
\newcommand{\A}{\mathcal{A}}
\newcommand{\B}{\mathcal{B}}
\newcommand{\C}{\mathcal{C}}
\def\be#1\ee{\begin{equation}#1\end{equation}}
\newcommand{\ba}{\begin{eqnarray*} }
\newcommand{\ea}{\end{eqnarray*} }
\date{}
\title{Suprema of canonical Weibull processes}
\begin{document}
\begin{abstract}
In this note we investigate the problem of bounding the suprema of canonical processes based on r.v.s with tails $\exp{(-t^r)}$, where $0<r \le 1$. We propose a method using non-increasing rearrangement that provides a two-sided bound.
\end{abstract}

\maketitle

\section*{Introduction}
It is quite common, both in theoretical and practical studies concerning randomness, to ask about the supremum of a stochastic process. More precisely, one is usually interested in the mean of a supremum of a stochastic process over some set. Knowing, that this quantity can be inherently too complicated, we seek for two-sided bounds that differ only by a multiplicative constant. The so-called chaining method, is the modern approach to this problem. Invented by A. Kolmogorov it has proven very useful in answering a number of questions. See monograph \cite{TALBOOK}.

In this paper, we are interested in the study of stochastic processes of the form $X_t = \sum_{k=1}^{\infty} t_k X_k$, where $X_k$ are independent r.v.s. Such processes are called canonical and we are interested in the quantity $\E \sup_{t \in T} X_t$, where $T \subset \ell^2$. To ensure that this series converges a.s. for $t \in \ell^2$, it is enough to assume that $X_k$ are standardized, i.e. have zero mean and unit variance. To avoid any measurability questions and problems, we define
\[
\E \sup_{t \in T} X_t := \sup \left\{ \E \sup_{t \in S} X_t \mid S \subset T \ \text{finite} \right\}.
\]
Consequently, as long as our bounds do not depend on $n$, it is enough to treat the case $T \subset \R^n$ finite.
It turns out, that the road to two-sided bounds for canonical processes leads through the metric space $(T,d)$, where $d$ is some metric related to the process $(X_t)_{t \in T}$. 

Before moving further, let us introduce some notation and recall the basic definitions. For a random variable $X$ and $1 \le p < \infty$, we define $\| X \|_p := \left( \E |X|^p \right)^{1/p}$. Similarly, for a sequence $x = (x_k)_{k=1}^{\infty}$, we define $\| \cdot \|_p$ as the $\ell^p$ norm, i.e. 
\begin{flalign*}
& \|x\|_p  = \left( \sum_{k=1}^\infty |x_k|^p \right)^{1/p}, \\
& \|x\|_\infty = \sup_{1 \le k < \infty} |x_k|.
\end{flalign*} 
By $d_p$ we will denote the metric induced from the $\ell^p$ norm, and by $\Delta_d(A)$,  the diameter of the set $A$ in the metric $d$. Given a sequence of partitions $(\A_n)_{n\ge0}$ of the set $T$, for $t\in T$ by $A_n(t)$ we will denote the unique set $A_n \in \A_n$, such that $t \in A_n$. For a sequence of r.v.s $(Y_k)_{k=1}^n$, by $(Y_k^*)_{k=1}^n$ we will denote the non-increasing rearrangement of absolute values of $(Y_k)_{k=1}^n$, in other words $Y_k^* = \operatorname*{k-\,max}_i |Y_i|$. Furthermore, $L$ will always denote a constant, which may differ at each occurrence. If we allow the constant to depend on the parameter $r$, we will denote it by $L(r)$. A bit informally, by a tail of a symmetric r.v. $X$ we will mean a tail of $|X|$, i.e. the function
$t\to \Pr(|X|> t)$, $t>0$. Finally, for a natural number $n$, $[n]$ will stand for the set $\{1,..., n\}$.
\begin{defi}
A sequence of partitions $(\A_n)_{n\ge0}$ is called increasing if for every $B \in \A_{n+1}$, there exists $A \in \A_n$, such that $B \subset A$.
\end{defi}

\begin{defi}
An increasing set of partitions $(\A_n)_{n\ge0}$ of $T$ is called admissible if $\A_0 = \{T\}$ and $|\A_n| \le 2^{2^n}$.
\end{defi}

\begin{defi}
For $\alpha > 0$ and a metric space $(T,d)$, we define
\[
\g_\alpha (T,d) = \inf \sup_{t\in T} \sum_{n \ge 0} 2^{n/\alpha} \Delta_d(A_n(t)),
\]
where the infimum runs over all admissible sequences of partitions of the set $T$.
\end{defi}

In the Gaussian case, that is when $X_t = \sum_{k=1}^{\infty} t_k g_k$, where $g_k$ are i.i.d. standard Gaussians, we have the celebrated Fernique-Talagrand majorizing measure theorem.
\begin{thm}[Fernique, Talagrand, \cite{FER, TAL_G}]\label{gauss}
Let $(X_t)_{t \in T}$ be a centered Gaussian process, and $d(s,t) = \sqrt{\E |X_s - X_t|^2}$. Then, there exists an absolute constant $L$, such that
\[
\frac{1}{L} \g_2(T,d) \le \E \sup_{t \in T} X_t \le  L  \g_2(T,d).
\]
\end{thm}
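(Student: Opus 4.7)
My plan is to prove the two inequalities separately, as is standard. The upper bound is the comparatively easy ``generic chaining'' direction. Fix an admissible sequence of partitions $(\A_n)_{n\ge 0}$ which is almost optimal for $\g_2(T,d)$, and for each $n$ choose a selector $\pi_n : T \to T$ with $\pi_n(t) \in A_n(t)$. Since $T$ is finite (by the reduction in the introduction) and $|\A_0|=1$, for large enough $n$ each $A_n(t) = \{t\}$, so the telescoping identity $X_t - X_{\pi_0(t)} = \sum_{n \ge 1} (X_{\pi_n(t)} - X_{\pi_{n-1}(t)})$ holds. Each increment $X_{\pi_n(t)} - X_{\pi_{n-1}(t)}$ is Gaussian with standard deviation at most $\Delta_d(A_{n-1}(t))$. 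The key point is that the number of distinct pairs $(\pi_n(t),\pi_{n-1}(t))$ as $t$ ranges over $T$ is at most $|\A_n| \cdot |\A_{n-1}| \le 2^{2^{n+1}}$. A union bound against Gaussian tails $\Pr(|Z|>u)\le 2e^{-u^2/2\sigma^2}$, integrated over $u$, gives $\E \sup_t \sum_{n\ge 1}(X_{\pi_n(t)} - X_{\pi_{n-1}(t)}) \le L \sup_{t} \sum_{n\ge 1} 2^{n/2}\Delta_d(A_{n-1}(t))$, which is the right-hand side.

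For the lower bound I would follow Talagrand's functional / partitioning scheme. Define the functional $F(A) := \E \sup_{t\in A} X_t$ on subsets of $T$; it is monotone in $A$. The core analytic input is Sudakov minoration: if $t_1,\dots,t_m$ are pairwise $a$-separated in $d$, then $\E \max_{i\le m} X_{t_i} \ge L^{-1} a \sqrt{\log m}$. This follows from Gaussian concentration and the fact that $d$ is the $L^2$ metric of the process. I would then upgrade this to a growth condition: if $t_1,\dots,t_m$ are $a$-separated and $H_\ell \subset B(t_\ell, a/8)$, then
\[
F\Bigl(\bigcup_{\ell=1}^m H_\ell \Bigr) \ge L^{-1} a \sqrt{\log m} + \min_{\ell \le m} F(H_\ell).
\]
The proof uses the comparison $\E \sup_\ell \sup_{t\in H_\ell} X_t \ge \E \sup_\ell (X_{t_\ell} + \sup_{t \in H_\ell}(X_t - X_{t_\ell}))$, splits by symmetry/independence-style Gaussian comparison (Slepian or Gordon), and absorbs the oscillation on each ball.

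With monotonicity and the growth condition in hand, I would invoke Talagrand's general partitioning scheme. Starting from $\A_0 = \{T\}$, one recursively refines: at each cell $A$ of generation $n$, one chooses a maximal $r$-separated family in $A$ (with $r$ a geometrically decaying scale), groups the remaining points into the Voronoi-like balls around those centers, and uses the growth condition to ensure $\sum_n 2^{n/2}\Delta_d(A_n(t)) \le L\, F(T)$ along every branch. The cardinality bound $|\A_n|\le 2^{2^n}$ is maintained by merging small cells when local multiplicity is too large, which is controlled exactly by the growth bound $\sqrt{\log m}$. This step is the main obstacle: the bookkeeping in the recursion --- making sure the functional decrement at each split pays for both the current diameter contribution and the doubling of the partition --- is delicate, and the verification of the growth condition from Sudakov minoration, while standard, is where the Gaussian-specific properties (Slepian's inequality, rotational invariance, sharp concentration) are actually used.
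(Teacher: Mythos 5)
The paper does not prove Theorem \ref{gauss}: it is cited as a known result (Fernique, Talagrand) and used as a black box, so there is no in-paper proof to compare your attempt against. Your sketch of the upper bound is essentially correct and complete in spirit: near-optimal admissible partitions, selectors $\pi_n(t)\in A_n(t)$, telescoping, the observation that both $\pi_n(t)$ and $\pi_{n-1}(t)$ lie in $A_{n-1}(t)$ so the increment has standard deviation at most $\Delta_d(A_{n-1}(t))$, a union bound over the at most $|\A_n|\,|\A_{n-1}|\le 2^{2^{n+1}}$ chaining pairs at level $n$ with threshold $u\,2^{(n+1)/2}\Delta_d(A_{n-1}(t))$, and integration in $u$. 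This is the standard generic chaining argument and the details you omit are routine. Your sketch of the lower bound is a correct high-level description of Talagrand's proof of the majorizing measure theorem, but it is very far from a proof: you correctly identify Sudakov minoration and a growth condition of the form $F(\bigcup_\ell H_\ell)\ge L^{-1}a\sqrt{\log m}+\min_\ell F(H_\ell)$ as the analytic inputs, and you correctly flag that the recursive partitioning scheme that turns these into an admissible sequence controlling $\g_2$ is the real content. One small point of precision: the main Gaussian-specific tool in the verification of the growth condition is the Borell--Tsirelson--Ibragimov--Sudakov concentration inequality (to replace each $\sup_{t\in H_\ell}(X_t-X_{t_\ell})$ by an almost deterministic quantity near $F(H_\ell)$), together with Sudakov minoration for the separated centers; Slepian/Gordon comparison is used in proving Sudakov minoration itself rather than in this splitting step. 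As it stands, the lower-bound portion is an outline of a deep theorem rather than a proof; this is entirely appropriate here, since the paper itself only cites the result, but the gap between your sketch and a complete argument is substantial and should be acknowledged as such.
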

In the case of the canonical processes based on independent symmetric r.v.s with tails $\exp{(-t^r)}$ we have the following theorem due to Talagrand.
\begin{thm}[Talagrand, \cite{TAL_R}]\label{r1}
Let $T \subset \ell^2$, and $(X_k)$ be a sequence of independent symmetric r.v.s with tails $\exp{(-t^r)}$, where $r \in [1,2]$. Then
\[
\frac{1}{L(r)} \left\{ \g_2(T,d_2) + \g_r(T,d_\infty) \right\} \le \E \sup_{t \in T} X_t \le L(r) \left\{ \g_2(T,d_2) + \g_r(T,d_\infty) \right\},
\]
where
\[
X_t =  \sum_{k=1}^\infty t_k X_k.
\]
\end{thm}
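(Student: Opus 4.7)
The plan is to prove the two inequalities separately, with the upper bound coming from a generic chaining argument driven by a Bernstein-type tail estimate, and the lower bound coming from Talagrand's partitioning scheme applied to a Sudakov-type minoration tailored to Weibull variables.

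For the upper bound the starting point is the mixed tail inequality, valid for $r\in[1,2]$,
\[
\Pr\left(\left|\sum_k t_k X_k\right| > u\right) \le 2\exp\left(-\min\left(\frac{u^2}{L\|t\|_2^2}, \frac{u^r}{L\|t\|_\infty^r}\right)\right),
\]
which follows from Bernstein's inequality after controlling the Laplace transform of a $\psi_r$ variable. Applied to $t\mapsto s-t$ this produces a subgaussian bound up to scale $\|s-t\|_2^2/\|s-t\|_\infty^{2-r}$ and a pure Weibull tail beyond. I would then pick admissible sequences $(\A_n)$ and $(\B_n)$ nearly optimal for $\g_2(T,d_2)$ and $\g_r(T,d_\infty)$, form a common admissible refinement (absorbing the $2^{2^n}$-vs-$2^{2^{n+1}}$ mismatch by an index shift), and fix representatives $\pi_n(t)$. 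Writing $X_t-X_{\pi_0(t)}=\sum_{n\ge 1}(X_{\pi_n(t)}-X_{\pi_{n-1}(t)})$ and applying a union bound over the at most $2^{2^{n+1}}$ relevant pairs with chaining radius
\[
u_n(s,t)=L(r)\bigl(2^{n/2}d_2(s,t)+2^{n/r}d_\infty(s,t)\bigr),
\]
both terms of the min contribute exponent at least $L\cdot 2^n$, so the bad event has probability summable in $n$; integrating tails yields the desired $L(r)\bigl(\g_2(T,d_2)+\g_r(T,d_\infty)\bigr)$ upper bound.

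The lower bound is the real work and is accomplished through Talagrand's abstract partitioning (growth-condition) theorem, specialized to a Sudakov-type minoration for Weibull processes. The minoration I need has the schematic form: whenever $t_1,\ldots,t_N\in T$ are well separated in a sense combining $d_2$ and $d_\infty$,
\[
\E\max_{i\le N} X_{t_i}\ge \frac{1}{L(r)}\bigl(\sqrt{\log N}\,\delta_2+(\log N)^{1/r}\,\delta_\infty\bigr),
\]
with appropriate separation parameters $\delta_2,\delta_\infty$. Fed into Talagrand's inductive construction, this minoration produces admissible partitions verifying a growth condition matched to the two functionals $\g_2$ and $\g_r$ simultaneously, from which the abstract theorem extracts the lower bound on $\g_2(T,d_2)+\g_r(T,d_\infty)$.

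The hard part is the Sudakov minoration itself: one must disentangle the $\ell^2$-contribution (essentially a Gaussian comparison, via a central limit or decoupling on the bounded part of the $X_k$) from the $\ell^\infty$-contribution (coming from the heavy-tailed part producing a single large spike on coordinates where the $t_i$ differ). A standard device is to truncate each $X_k$ at a level calibrated by $r$ and $N$, treating the bounded piece as subgaussian and the residual as a genuinely Weibull contribution, so that the two regimes furnish the two terms of the minoration. Once the minoration is established, the inductive construction of partitions and the appeal to the abstract growth-condition theorem are routine within the generic-chaining framework.
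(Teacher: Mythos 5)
This statement is quoted from Talagrand \cite{TAL_R} and the paper does not reprove it; the only piece it reproves is the upper bound, as Theorem \ref{gora1}, valid for the wider range $0<r\le 2$. Your upper-bound sketch is a valid alternative route: you chain directly against the mixed tail $\exp\bigl(-\min(u^2/L\|t\|_2^2,\,u^r/L\|t\|_\infty^r)\bigr)$, whereas the paper goes through $L^p$ moments (Theorem \ref{hmso}, Corollary \ref{momenty}) and then feeds them into the Lata{\l}a--Mendelson comparison Lemma \ref{lm}, so that the summability of the union bound and the intersection of two near-optimal admissible sequences are packaged inside the lemma rather than redone by hand. The two are equivalent in power here; the moment route has the advantage that it needs no separate Laplace-transform estimate and it extends below $r=1$, where the tails are no longer log-concave and your Bernstein/$\psi_r$ argument would not apply.

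The lower bound is where your proposal has a genuine gap. You name the right architecture (Sudakov-type minoration fed into Talagrand's growth-condition partitioning), which is indeed how Talagrand proceeds, but everything that makes this hard is left as a black box. Concretely: (i) the separation hypotheses in your claimed minoration are unstated --- ``well separated in a sense combining $d_2$ and $d_\infty$'' is precisely the delicate part, since a single mixed separation condition yielding both $\sqrt{\log N}\,\delta_2$ and $(\log N)^{1/r}\,\delta_\infty$ simultaneously does not come for free, and Talagrand in fact has to handle the two contributions with separate, carefully matched growth conditions; (ii) the truncation/decoupling step you gesture at (split each $X_k$ into a bounded subgaussian part and a heavy residual) is not carried out, and verifying that the two pieces genuinely produce the two terms of the minoration, rather than one swamping the other, is the substance of the proof; (iii) you do not verify that the resulting growth functional actually dominates $\g_2+\g_r$ after the inductive construction, which requires checking the compatibility of the two scales. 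As written, the lower bound is a plan, not a proof, and the plan's hardest step is asserted rather than argued.
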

In \cite{LAT} this result was later extended by R. Lata{\l}a to the case of symmetric variables with log-concave tails that do not decrease too rapidly. That is $\P(|X_k| \ge t) = \exp{(-N_k(t))}$, where $N_k$ is convex, and there is $\gamma < \infty$ such that $N_k(2t) \le \gamma N_k(t)$ . Slightly more general situation was considered in \cite{TTKOCZ_LAT}.

In this note, we treat the case of log-convex tails. We are interested in the so-called Weibull r.v.s, that have tails $\exp{(-|t|^r)}$, where $0 < r \le 1$. In the first section, we show that the lower bound of Theorem \ref{r1} does not hold in the case of $r < 1$. In the second section we present the main theorem of the note, which is a two-sided bound based on the functional $\g_2$ and a random permutation of the index set.

\section{One-sided bound}
In this section we recall the one-sided upper bound and present the counter-example for the lower bound. Although the proof of the upper bound follows standard argument, we present it for completeness. We start with moment inequalities for sums of independent symmetric r.v.s with tails $\exp{(-t^r)}$.
\begin{thm}[Hitczenko, Montgomery-Smith, Oleszkiewicz \cite{HMSO}]\label{hmso}
Let $(Y_k)$ be a sequence of independent r.v.s with log-convex tails and $p\ge2$. Then, for some constant $L$
\[
\frac{1}{L} \mathcal{M}_{Y,p} \le \left\| \sum_k Y_k \right\|_p \le L \mathcal{M}_{Y,p}, 
\]
where
\[
\mathcal{M}_{Y,p} = \left\{ \left(\sum_k \|Y_k\|_p^p  \right)^{1/p} + \sqrt{p} \left(\sum_k \|Y_k\|_2^2 \right)^{1/2} \right\}.
\]
\end{thm}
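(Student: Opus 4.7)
My plan is to prove the two inequalities separately via a Rosenthal-type argument, with log-convexity of the tails entering essentially in both directions.

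\textbf{Lower bound.} After the usual symmetrization (replacing each $Y_k$ by $Y_k - Y_k'$ with $Y_k'$ an independent copy, at the cost of absolute constants), we may assume the $Y_k$ are symmetric. For the $\sqrt p\,(\sum_k \|Y_k\|_2^2)^{1/2}$ piece, condition on the magnitudes $(|Y_k|)$ so that the sum becomes a Rademacher series with weights $|Y_k|$, apply Khinchine's lower bound in the Gaussian-like regime of $p$, and take expectation via Jensen's inequality; in the regime where Khinchine's lower bound degrades, the other term $(\sum_k \|Y_k\|_p^p)^{1/p}$ of $\mathcal{M}_{Y,p}$ automatically takes over. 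For the $\ell^p$ piece, use the Hoffmann--J\o{}rgensen-type inequality $\|\sum_k Y_k\|_p \ge c\|\max_k|Y_k|\|_p$ combined with
\[
\|\max_k|Y_k|\|_p \;\ge\; c\bigl(\sum_k \|Y_k\|_p^p\bigr)^{1/p},
\]
proved via the tail-integral formula: log-convexity places most of $\E|Y_k|^p = \int_0^\infty p t^{p-1}\P(|Y_k|>t)\,dt$ into a ``high-quantile'' event $\{|Y_k|>u_{k,p}\}$, and independence of these events across $k$ ensures their contributions add up inside the maximum.

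\textbf{Upper bound.} Choose truncation levels $u_k$ with $u_k \sim \|Y_k\|_p$, so that $\P(|Y_k|>u_k) \le e^{-cp}$, and decompose $Y_k = Y_k' + Y_k''$ where $Y_k' = Y_k\1_{\{|Y_k|\le u_k\}}$ (re-centered if needed). For the truncated part, a Bennett/Bernstein-type $L^p$-moment bound yields
\[
\|\sum_k Y_k'\|_p \le L\sqrt p\,\bigl(\sum_k \|Y_k'\|_2^2\bigr)^{1/2} + L\,p\,\max_k u_k,
\]
and the ``peak'' term $p\max_k u_k$ is dominated by $(\sum_k\|Y_k\|_p^p)^{1/p}$ by the choice of $u_k$. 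For the tail part, the indicators $\1_{\{|Y_k|>u_k\}}$ are so rare that with overwhelming probability at most a single summand is nonzero, which gives $\|\sum_k Y_k''\|_p \le L\bigl(\sum_k \|Y_k\|_p^p\bigr)^{1/p}$ by a direct $L^p$ triangle estimate on the ``effective'' single-spike sum.

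The hardest step is verifying the lower bound $\|\max_k|Y_k|\|_p \ge c\bigl(\sum_k\|Y_k\|_p^p\bigr)^{1/p}$; without log-convexity of the tails it can fail, and even assuming it one must carefully locate the effective support at level $u_{k,p}$ and compare $\int_{u_{k,p}}^\infty p t^{p-1}\P(|Y_k|>t)\,dt$ with the full $\|Y_k\|_p^p$. Once this is in place, matching up the two regimes of $p$ in the Khinchine-based step and balancing the truncation level in the upper bound are routine.
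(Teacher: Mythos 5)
The paper does not prove Theorem~\ref{hmso} at all; it is quoted as a black-box result from \cite{HMSO}, so there is no in-paper proof to compare your sketch against.

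Assessed on its own terms, the sketch has genuine gaps. The pivotal lemma you propose for the lower bound,
\[
\bigl\|\max_k|Y_k|\bigr\|_p \;\ge\; c\Bigl(\sum_k \|Y_k\|_p^p\Bigr)^{1/p},
\]
is false as stated. Take $Y_1,\dots,Y_n$ i.i.d.\ symmetric with tail $\exp(-t^r)$, $p=2$ and $n$ large: the left-hand side is of order $(\log n)^{1/r}$ while the right-hand side is of order $\sqrt{n}$. More generally, once the number of summands exceeds roughly $e^{cp}$ the $\ell^p$ term in $\mathcal{M}_{Y,p}$ is no longer witnessed by $\|\max_k|Y_k|\|_p$ --- it is hidden under the Gaussian term $\sqrt{p}(\sum_k\|Y_k\|_2^2)^{1/2}$ --- so one cannot prove the two lower bounds independently the way you try to; the two regimes have to be played off against each other. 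The upper-bound half is also off. With $u_k\sim\|Y_k\|_p$, the Bernstein/Bennett ``peak'' term $p\max_k u_k\sim p\max_k\|Y_k\|_p$ is not dominated by $(\sum_k\|Y_k\|_p^p)^{1/p}$: already with a single summand the right-hand side is $\max_k\|Y_k\|_p$ and you are over by a factor of order $p$; in general a naive truncation plus Bernstein only reproduces the generic Rosenthal bound with its $p/\log p$ loss, which is exactly what log-convexity is supposed to eliminate. Finally, the heuristic that ``with overwhelming probability at most one spike fires'' breaks down as soon as there are more than $e^{cp}$ summands, so the tail-part estimate $\|\sum_k Y_k''\|_p\lesssim(\sum_k\|Y_k\|_p^p)^{1/p}$, even if true, is not justified by that reasoning. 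The actual HMSO argument uses the log-convexity much more structurally (a representation via exponential variables and a matched comparison between the Gaussian and extreme regimes), and these are the ideas your sketch would need to reach the clean two-sided statement.
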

\begin{wn}\label{momenty}
Let $(X_k)$ be a sequence of independent symmetric r.v.s with tails $\exp{(-t^r)}$, where $0<r \le2$. Then, for $p\ge2$ and some constant $L(r)$ which depends only on $r$
\[
\| \sum_k t_k X_k \|_p \le L(r) \left\{   \sqrt{p} \|t\|_2 + p^{1/r}\|t\|_\infty \right\}.
\]
\end{wn}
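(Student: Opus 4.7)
The plan is to apply Theorem \ref{hmso} directly to the sum $\sum_k t_k X_k$ after computing the moment norms of the scaled Weibull variables $Y_k = t_k X_k$, and then to convert the resulting $\ell^p$-norm of $t$ into the claimed combination of $\|t\|_2$ and $\|t\|_\infty$ by a standard interpolation inequality.

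First, I would compute the single-variable moments. From $\mathbf{P}(|X_k|>t)=e^{-t^r}$ one gets $\|X_k\|_p^p = (p/r)\Gamma(p/r)$ by the substitution $u = t^r$, and Stirling yields $\|X_k\|_p \le L(r)\,p^{1/r}$ for $p\ge 2$, while $\|X_k\|_2$ is a constant depending only on $r$. Since $Y_k = t_k X_k$ has tail $\exp(-(t/|t_k|)^r)$, which is log-convex in $t$ for $r\in(0,1]$, Theorem \ref{hmso} applies, and plugging the above estimates into $\mathcal{M}_{Y,p}$ gives
\[
\bigl\|\sum_k t_k X_k\bigr\|_p \;\le\; L\,\mathcal{M}_{Y,p}\;\le\; L(r)\bigl\{p^{1/r}\|t\|_p + \sqrt{p}\,\|t\|_2\bigr\}.
\]
The case $r\in(1,2]$ (where the tails are log-concave and HMSO does not directly apply) is the classical subgaussian / sub-exponential moment estimate, which follows from the same computation using in place of Theorem \ref{hmso} the analogous Rosenthal-type inequality for log-concave tails (e.g.\ from \cite{LAT}).

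The remaining task is to dominate $p^{1/r}\|t\|_p$ by $p^{1/r}\|t\|_\infty + \sqrt{p}\,\|t\|_2$. The starting point is $\sum_k |t_k|^p \le \|t\|_\infty^{p-2}\|t\|_2^2$, which gives $\|t\|_p \le \|t\|_\infty^{1-2/p}\|t\|_2^{2/p}$. Writing $a := p^{1/r}\|t\|_\infty$ and $b := \sqrt{p}\,\|t\|_2$, a direct algebraic check yields the identity
\[
p^{1/r}\,\|t\|_\infty^{1-2/p}\|t\|_2^{2/p} \;=\; p^{(2-r)/(rp)}\, a^{1-2/p}\,b^{2/p},
\]
and since $(\log p)/p \le 1/e$ for $p \ge 2$, the prefactor satisfies $p^{(2-r)/(rp)} \le e^{(2-r)/(re)} \le L(r)$. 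The arithmetic-geometric inequality $a^{1-\theta}b^{\theta} \le a+b$ then completes the proof.

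The only step that requires any care is the last one: one must split the exponent of $p$ so that the prefactor produced by the interpolation is of the form $p^{\mathrm{const}/p}$, hence uniformly bounded for $p\ge 2$. The restriction $r \le 2$ is exactly what keeps the exponent $(2-r)/(rp)$ nonnegative and of the right order; everything else is a routine chain of standard inequalities.
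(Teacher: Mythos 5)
Your proof is correct and follows essentially the same three-step plan as the paper's: bound the single-variable moments $\|X_k\|_p$ by $L(r)p^{1/r}$, apply Theorem~\ref{hmso} to the rescaled variables $t_kX_k$ to obtain $\|\sum_k t_kX_k\|_p\le L(r)\{\sqrt{p}\|t\|_2+p^{1/r}\|t\|_p\}$, and then dominate $p^{1/r}\|t\|_p$ by $\sqrt{p}\|t\|_2+p^{1/r}\|t\|_\infty$ via $\|t\|_p\le\|t\|_\infty^{1-2/p}\|t\|_2^{2/p}$. The only cosmetic difference is in the first step: the paper bounds $\|X_k\|_p$ via the exponential moment $\E\exp(|X_k|^r/2)\le 2$ and the elementary inequality $(x/a)^a\le e^x$, whereas you compute $\E|X_k|^p=(p/r)\Gamma(p/r)$ exactly and invoke Stirling; both give the same estimate. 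Your handling of the interpolation (isolating the prefactor $p^{(2-r)/(rp)}$, bounding it uniformly via $(\log p)/p\le 1/e$, and applying AM--GM to $a^{1-2/p}b^{2/p}$) is algebraically equivalent to the paper's normalization $\sqrt{p}\|t\|_2+p^{1/r}\|t\|_\infty=1$.

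You also flag a subtlety the paper glosses over: for $r\in(1,2]$ the tails $\exp(-t^r)$ are log-concave rather than log-convex, so Theorem~\ref{hmso} as stated does not literally apply, while the paper invokes it for all $0<r\le 2$ without comment. The two-sided moment estimate with the same $\mathcal{M}_{Y,p}$ does hold in that range (the log-concave case goes back to Gluskin and Kwapie\'n and is also covered in the Hitczenko--Montgomery-Smith--Oleszkiewicz paper), so the result is fine, but your noting that a companion result is needed there is a fair and accurate observation.
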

\begin{proof}
We will first show that
\[
\|X_k\|_p < L(r) p^{1/r}.
\]
We have $\P(|X_k| > u) = \exp{(-u^r)}$, therefore for $t \ge 1$
\[
\P(\exp{(|X_k|^r/2)} > t) = \frac{1}{t^2}.
\]
Hence
\[
\E \exp{(|X_k|^r/2)} = \int_0^\infty \P(\exp{(|X_k|^r/2)} > t) \ dt \le 1 + \int_1^\infty  \frac{1}{t^2} \ dt = 2.
\]
Using the inequality $(x/a)^a \le \exp{(x)}$ for $x,a\ge0$, we see that
\[
\E \left( \frac{|X_k|^r}{2a}\right)^a \le \E \exp{(|X_k|^r/2)} \le 2.
\]
Setting $p=ra$, we obtain
\[
\|X_k\|_p = (\E |X_k|^p)^{1/p} \le 2^{1/p} (2p/r)^{1/r} \le L(r) p^{1/r}.
\]
We now use Theorem \ref{hmso} for the sequence $(t_kX_k)$ and $p\ge2$ to get
\[
\| \sum_k t_k X_k \|_p \le L(r) \left\{   \sqrt{p} \|t\|_2 + p^{1/r}\|t\|_p \right\}.
\]
To complete the proof it is now enough to show that
\[
\sqrt{p} \|t\|_2 + p^{1/r}\|t\|_p \le L(r) \left\{ \sqrt{p} \|t\|_2 + p^{1/r}\|t\|_\infty \right\}.
\]
Using homogeneity, we can assume that
\[
\sqrt{p} \|t\|_2 + p^{1/r}\|t\|_\infty = 1,
\]
then $\|t\|_2 \le p^{-1/2}$ and $\|t\|_\infty \le p^{-1/r}$. Therefore, for $p\ge2$,
\begin{align*}
\|t\|_p &\le \left( \sum_k |t_k|^2 \|t\|_{\infty}^{p-2} \right)^{1/p} \le \left( p^{-1 - (p-2)/r} \right)^{1/p} 
= \left( p^{-p/r} p^{(2-r)/r}  \right)^{1/p} \\
& \le \frac{e^{(2-r)/r} }{p^{1/r}} 
= \frac{e^{(2-r)/r}}{p^{1/r}} \left\{ \sqrt{p} \|t\|_2 + p^{1/r}\|t\|_\infty \right\},
\end{align*}
where the last inequality follows form the fact that $p^{1/p} \le e$.
\end{proof}

\begin{rem}
For a symmetric r.v. $X$ with tail $\exp{(-t^r)}$ and $t=p^{1/r}$, we have
\[
\E |X|^p \ge t^p \P(|X|\ge t) = p^{p/r} \frac{1}{e^p},
\]
therefore $\|X\|_p \ge L p^{1/r}$ and consequently the bound of the above corollary can be reversed.
\end{rem}

To connect the supremum of the process to its moments one needs the following Lemma due to R. Lata{\l}a and S. Mendelson.

\begin{lem}[Lata{\l}a, Mendelson \cite{TTKOCZ_LAT, MEN}, {\cite[Exercise 2.2.24]{TALBOOK}}]\label{lm}
Let $(\A_n)_{n \ge 0}$ be an arbitrary admissible sequence of partitions of $T$. Then, for every separable process $(X_t)_{t \in T}$
\[
\E \sup_{s,t \in T} |X_s - X_t| \le L \sup_{t \in T} \sum_{k \ge 0} \Delta_k (A_k (t) ),
\]
where
\[
\Delta_k (A) = \sup_{s,t \in A} \|X_s - X_t\|_{2^k} .
\]
\end{lem}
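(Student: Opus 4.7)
The plan is to run the standard generic chaining argument. By the convention adopted in this paper the expected supremum is the sup over finite subsets, so we may assume $T$ is finite and (after refining the partition if needed) that $A_n(t) = \{t\}$ once $n$ is large enough. Pick a point $c_A \in A$ for every $A$ in every $\A_n$, and define $\pi_n(t) = c_{A_n(t)}$; note $\pi_0(t) = c_T$ is the same for all $t$ and $\pi_n(t) = t$ eventually. The telescoping
\[
X_t - X_{c_T} = \sum_{n \ge 0} Y_n(t), \qquad Y_n(t) := X_{\pi_{n+1}(t)} - X_{\pi_n(t)},
\]
is the starting point. Since $\pi_n(t), \pi_{n+1}(t) \in A_n(t)$, one has $\|Y_n(t)\|_{2^n} \le \Delta_n(A_n(t))$. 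Using $\sup_{s,t}|X_s - X_t| \le 2 \sup_t |X_t - X_{c_T}|$, the goal reduces to proving $\E \sup_t \sum_{n \ge 0} |Y_n(t)| \le L M$, where $M := \sup_t \sum_{k \ge 0} \Delta_k(A_k(t))$.

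The main obstacle is that the crude bound $\E \sum_n \sup_t |Y_n(t)|$ only yields $\sum_n \sup_t \Delta_n(A_n(t))$, which may be much larger than the target $\sup_t \sum_n \Delta_n(\cdot)$; one must control all chains simultaneously through a single uniform tail bound. For each $n \ge 1$, Chebyshev applied to each of the $\le |\A_{n+1}| \le 2^{2^{n+1}}$ pairs $(A,B)$ with $A \in \A_n$, $B \in \A_{n+1}$, $B \subset A$, combined with a union bound, gives
\[
\P\!\left(\exists\, t: |Y_n(t)| > u\, \Delta_n(A_n(t))\right) \le 2^{2^{n+1}} u^{-2^n} = (4/u)^{2^n}.
\]
On the intersection $E_u$ of these events for $n \ge 1$, every $t$ satisfies $|Y_n(t)| \le u\, \Delta_n(A_n(t))$ for all $n \ge 1$, and consequently $\sup_t \sum_{n \ge 1} |Y_n(t)| \le uM$. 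The complement has probability $\sum_{n \ge 1}(4/u)^{2^n}$, which decays doubly-exponentially in $u$, so integrating the tail yields $\E \sup_t \sum_{n \ge 1} |Y_n(t)| \le L M$.

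The level $n = 0$ must be treated separately, because Chebyshev with $p = 2^0 = 1$ gives a non-integrable tail; but no chaining is needed here since $Y_0(t)$ takes at most $|\A_1| \le 4$ distinct values, so
\[
\E \sup_t |Y_0(t)| \le \sum_{B \in \A_1} \|X_{c_B} - X_{c_T}\|_1 \le 4\, \Delta_0(T) \le 4M.
\]
Summing the two bounds and restoring the factor $2$ from the double-sup reduction yields $\E \sup_{s,t} |X_s - X_t| \le L M$, as required.
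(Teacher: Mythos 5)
Your proof is correct, and you should note that the paper does not actually prove this lemma: it is imported by citation from Lata{\l}a--Tkocz, Mendelson--Paouris and Talagrand's book, so there is no ``paper's own proof'' to compare against. What you wrote is the standard generic chaining argument: telescoping along chain centers, Chebyshev at the $2^n$-th moment for each of the at most $|\A_{n+1}|\le 2^{2^{n+1}}$ links at level $n$, a union bound giving $\P(\exists t:\ |Y_n(t)|>u\,\Delta_n(A_n(t)))\le(4/u)^{2^n}$, and integration of the resulting tail. Your separate treatment of $n=0$ (where the $p=1$ Chebyshev tail is not integrable, but $Y_0(t)$ takes at most $|\A_1|\le 4$ values so a crude $L^1$ bound by $4\Delta_0(T)\le 4M$ suffices) is exactly the right fix, and the reduction to finite $T$ with eventually singleton partitions is legitimate since refining the partitions only decreases the right-hand side while remaining admissible.

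One small imprecision worth flagging: the tail $\sum_{n\ge1}(4/u)^{2^n}$ does not decay ``doubly-exponentially in $u$''; for $u\ge 8$ it is dominated by the $n=1$ term and so decays like $u^{-2}$. That is still integrable over $[8,\infty)$, so the conclusion $\E\sup_t\sum_{n\ge1}|Y_n(t)|\le LM$ stands; only the descriptive phrase should be adjusted.
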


We can now prove the upper bound, see Theorem 2.2.22 in \cite{TALBOOK} for a similar argument.

\begin{thm}\label{gora1}
Let $T \subset \R^n$, and $(X_k)$ be a sequence of independent symmetric r.v.s with tails $\exp{(-t^r)}$, where $0 < r \le 2$. Then, for some constant $L(r)$, which depends only on $r$
\[
\E \sup_{t \in T} X_t \le L(r) \left\{ \g_2(T,d_2) + \g_r(T,d_\infty) \right\},
\]
where
\[
X_t =  \sum_{k=1}^n t_k X_k.
\]
\end{thm}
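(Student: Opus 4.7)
The strategy is to combine the moment bound of Corollary \ref{momenty} with the Lata\l a--Mendelson chaining lemma (Lemma \ref{lm}). The key numerical input is that for any $s,t \in T$ and any $k \ge 0$,
\[
\|X_s - X_t\|_{2^k} = \Bigl\| \sum_{i} (s_i - t_i) X_i \Bigr\|_{2^k} \le L(r)\bigl\{ 2^{k/2} \|s-t\|_2 + 2^{k/r} \|s-t\|_\infty \bigr\},
\]
which follows from Corollary \ref{momenty} for $k \ge 1$ and from $\|Y\|_1 \le \|Y\|_2$ for $k = 0$. Consequently, for any subset $A \subset T$,
\[
\Delta_k(A) \le L(r)\bigl\{ 2^{k/2} \Delta_{d_2}(A) + 2^{k/r} \Delta_{d_\infty}(A) \bigr\}.
\]

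Next I would pick admissible sequences $(\B_n)_{n \ge 0}$ and $(\C_n)_{n \ge 0}$ that nearly realize $\g_2(T,d_2)$ and $\g_r(T,d_\infty)$, respectively, and form the common refinement $\A_n := \B_{n-1} \wedge \C_{n-1}$ for $n \ge 1$, with $\A_0 = \{T\}$. The cardinality bound $|\A_n| \le |\B_{n-1}| \cdot |\C_{n-1}| \le 2^{2^{n-1}} \cdot 2^{2^{n-1}} = 2^{2^n}$ shows this is admissible, and by construction $A_n(t) \subset B_{n-1}(t)$ and $A_n(t) \subset C_{n-1}(t)$ for $n \ge 1$.

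Applying Lemma \ref{lm} to this admissible sequence and invoking the bound on $\Delta_k$ above gives
\[
\E \sup_{s,t \in T} |X_s - X_t| \le L(r) \sup_{t \in T} \sum_{k \ge 0} \bigl\{ 2^{k/2} \Delta_{d_2}(A_k(t)) + 2^{k/r} \Delta_{d_\infty}(A_k(t)) \bigr\}.
\]
For $k \ge 1$ replace $A_k(t)$ by $B_{k-1}(t)$ (respectively $C_{k-1}(t)$), and for $k = 0$ bound $\Delta_{d_2}(T)$ and $\Delta_{d_\infty}(T)$ by $\g_2(T,d_2)$ and $\g_r(T,d_\infty)$. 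After shifting the summation index by one, the two sums become (up to constants depending on $r$) the defining sums of $\g_2(T,d_2)$ and $\g_r(T,d_\infty)$, yielding
\[
\E \sup_{s,t \in T} |X_s - X_t| \le L(r)\bigl\{ \g_2(T,d_2) + \g_r(T,d_\infty) \bigr\}.
\]

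Finally, to pass from $\E \sup_{s,t}|X_s - X_t|$ to $\E \sup_t X_t$, fix any $t_0 \in T$; since the $X_k$ are centered,
\[
\E \sup_{t \in T} X_t = \E \sup_{t \in T} (X_t - X_{t_0}) \le \E \sup_{s,t \in T} |X_s - X_t|,
\]
which closes the argument. The only mildly subtle step is the bookkeeping of the common refinement and the index shift; everything else is a direct substitution of Corollary \ref{momenty} into Lemma \ref{lm}.
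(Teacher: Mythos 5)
Your proof is correct and follows essentially the same route as the paper: bound the increment moments via Corollary \ref{momenty}, refine two nearly optimal admissible sequences into one (with the index shift to preserve admissibility), and apply Lemma \ref{lm} together with the centering trick $\E\sup_t X_t \le \E\sup_{s,t}|X_s-X_t|$. The only cosmetic difference is that you explicitly flag the $k=0$ case (where $p=1<2$), which the paper glosses over; that is a harmless refinement, not a change of method.
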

\begin{proof}
Take an arbitrary $t_0 \in T$, we have $\E X_{t_0} = 0 $ and therefore
\[
\E \sup_{t \in T} X_t = \E \sup_{t \in T} (X_t - X_{t_0}) \le \E \sup_{s,t \in T} |X_t - X_s|.
\]
To use Lemma \ref{lm}, we need an appropriate, admissible sequence of partitions. To create one, we can take two sequences of partitions such that they almost maximize the functionals $\g_2(T,d_2)$, $\g_r(T,d_\infty)$, and then intersect the sets on a given level. 

Let $\A_n$, $\B_n$ be an admissible sequences of partitions such that
\begin{align*}
& \sup_t \sum_{k\ge0} 2^{n/2} \Delta_{d_2}(A_n(t)) \le 2 \g_2(T,d_2), \\
& \sup_t \sum_{k\ge0} 2^{n/r} \Delta_{d_\infty}(B_n(t)) \le 2 \g_r(T,d_\infty).
\end{align*}
We will now construct an admissible sequence of partitions $\C_n$ with the desired properties. Set $\C_0 = \{ T \}$ and
\[
\C_n = \{ A \cap B \mid A \in \A_{n-1}, B \in \B_{n-1} \} \text{\quad for $n \ge 1$.}
\]
Note that $|\C_n| \le |\A_{n-1}||\B_{n-1}| \le 2^{2^n}$. Using Corollary \ref{momenty} with $p=2^n$, we see that  for $t \in T$
\begin{align*}
\Delta_n(C_n(t)) &=  \sup_{s,u \in C_n(t)} \|X_s - X_u\|_{2^n} \le L(r) \sup_{s,u \in C_n(t)}  \left(2^{n/2}\|s-u\|_2 + 2^{n/r}\|s-u\|_\infty \right) \\
& \le L(r) \left( 2^{n/2} \Delta_{d_2}(A_{n-1}(t)) + 2^{n/r}\Delta_{d_\infty}(B_{n-1}(t)) \right) \\
& \le L(r) \left( 2^{(n-1)/2} \Delta_{d_2}(A_{n-1}(t)) + 2^{(n-1)/r}\Delta_{d_\infty}(B_{n-1}(t)) \right),
\end{align*}
where in the penultimate inequality we use the fact that $C_n(t) \subset A_{n-1}(t)$ and $C_n(t) \subset B_{n-1}(t)$. It is now enough to use Lemma \ref{lm} for $\C_n$ to see that
\[
\E \sup_{t \in T} X_t \le L \sup_{t \in T} \sum_{n \ge 0} \Delta_n (C_n (t) ) 
\le L(r) \left\{ \g_2(T,d_2) + \g_r(T,d_\infty) \right\}.
\]

\end{proof}

\subsection{Counter-example for the lower bound}
We will now show, that one cannot reverse the bound of Theorem \ref{gora1} in the case of $r < 1$. The main reason for this is that $\g_r(T,d_\infty)$ is too large compared to the size of the accompanying process.

Let $T = \{ -1;1\}^n$, and $(X_k)$ be a sequence of independent symmetric r.v.s with tails $\exp{(-t^r)}$. We have
\[
\E \sup_{t \in T} \sum_{k=1}^n t_k X_k = \E \sum_{k=1}^n |X_k| = n \E |X_1|.
\]
On the other side, for $k = \lfloor \frac{r+1}{2}\log_2{n}\rfloor < \log_2{n}$, we have $2^n > 2^{2^k}$. Therefore, in every admissible sequence of partitions of $T$, there is a set on the $k$-th level, that contains at least two points. The diameter of $T$ in the norm $\| \cdot \|_\infty$ is exactly $2$. For sufficiently large $n$, we have
\[
\g_r(T,d_\infty) > 2 \cdot 2^{k/r} > 2^{1-1/r} n^{\frac{r+1}{2r}},
\]
however $\frac{r+1}{2r} > 1$, and therefore
\[
\lim_{n \to +\infty} \frac{\g_r(T,d_\infty)}{\E \sup_{t \in T} \sum_{k=1}^n t_k X_k} = +\infty.
\]
In other words, we cannot find a constant $L(r)$, which depends only on $r$, such that the inequality
\[
\g_r(T,d_\infty) \le L(r) \E \sup_{t \in T} \sum_{k=1}^n t_k X_k
\]
holds in the setting of Theorem \ref{gora1}.

\section{Two-sided bound using the non-increasing rearrangement method}
In this section we present a two-sided bound. The method is based on the non-increasing rearrangement of $X_k$. The main result is the following theorem.
\begin{thm}\label{perm}
Suppose that $T \subset \R^n$, $n\ge2$, $X_t = \sum_{k=1}^n t_k X_k$, where $(X_k)$ is a sequence of independent, symmetric r.v.s with tails $\exp{(-t^r)}$. Let $\frac{1}{s} + \frac{1}{2} = \frac{1}{r}$, where $0 < r < 2$. Then for some constant $L(r)$ that depends only on $r$
\[
\frac{1}{L(r)} \E_\pi \g_2(T_\pi) \le \E \sup_{t \in T} X_t \le L(r) \E_\pi \g_2(T_\pi),
\]
where
\[
T_\pi = \left\{ \left( t_{\pi(k)} \left(\log{\frac{n}{k}}\right)^{1/s} \right)_{k=1}^n  \mid  (t_1,...,t_n) \in T \right\},
\]
and $\pi$ is a random (uniformly distributed) permutation of $[n]$.
\end{thm}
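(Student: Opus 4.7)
The proof will establish upper and lower bounds separately. The common starting point is Corollary \ref{momenty}, which, via the identity $p^{1/r} = \sqrt{p}\,p^{1/s}$, can be rewritten as
\[
\|X_t - X_s\|_p \le L(r)\sqrt{p}\bigl(\|t-s\|_2 + p^{1/s}\|t-s\|_\infty\bigr), \qquad p \ge 2.
\]
This has the sub-Gaussian scaling $\sqrt{p}$ times an effective size that inflates the $\|\cdot\|_\infty$-coordinate at exponent $1/s$. The plan is to recognize this size as, up to constants, the $\ell^2$-diameter of $T_\pi$ averaged over the random permutation $\pi$; the weights $(\log(n/k))^{1/s}$ are precisely what is needed to encode the inflated role of extreme coordinates in the chaining.

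For the upper bound I would apply the Lata{\l}a--Mendelson chaining (Lemma \ref{lm}). For each realization of $\pi$, take an admissible sequence $(\A_n^\pi)_{n \ge 0}$ of partitions of $T$ that nearly realizes $\g_2(T_\pi, d_2)$ via the natural bijection $T \leftrightarrow T_\pi$. The key step is to show that, after averaging over $\pi$, the $2^n$-moment diameter $\Delta_n^X(\A_n^\pi(t)) = \sup_{s,u}\|X_s - X_u\|_{2^n}$ is controlled by $L(r)\sqrt{2^n}\,\Delta_{d_2}(\A_n^\pi(t))$ on $T_\pi$, and hence the full chaining sum is bounded by $L(r)\,\g_2(T_\pi)$; the technical heart is that the term $p^{1/s}\|t-s\|_\infty$ is absorbed, on average over a uniform $\pi$, into the weighted $\ell^2$ diameter of $T_\pi$, because the weight $(\log(n/k))^{1/s}$ at a uniformly random position $k$ has the correct distributional profile to produce the factor $p^{1/s}$.

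For the lower bound I would use a Gaussian comparison. For each $\pi$, Theorem \ref{gauss} applied to the Gaussian process $G_t^\pi = \sum_j g_j\,t_{\pi(j)}(\log(n/j))^{1/s}$ gives $\E_g \sup_t G_t^\pi \ge \frac{1}{L}\g_2(T_\pi, d_2)$, so it suffices to prove the process comparison $\E\sup_t X_t \ge \frac{1}{L(r)}\E_{\pi, g}\sup_t G_t^\pi$. I would establish this via the non-increasing rearrangement representation $X_k = \epsilon_k Y^*_{\sigma(k)}$, where $(Y^*_j)$ is the decreasing rearrangement of $(|X_i|)$, $\sigma$ is the (uniformly distributed) rank permutation, and the $\epsilon_k$ are independent Rademachers; since $Y^*_j$ is typically of order at least $(\log(n/j))^{1/s}$, a Paley--Zygmund-type estimate on $\epsilon$ conditional on $(Y^*, \sigma)$ provides the required pointwise lower bound for $X_t$, matching $G_t^{\pi}$ up to a factor depending only on $r$.

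The hardest step will be the upper bound. A direct conditioning on the order statistics $(Y^*_j)$ would produce Gaussian-like weights of order $(\log(n/j))^{1/r}$, larger than the $(\log(n/j))^{1/s}$ appearing in $T_\pi$, so one cannot simply push $\E_\epsilon$ inside first. Instead, at each chaining scale $p = 2^n$, the Gaussian contribution $\sqrt{p}\|t-s\|_2$ and the Weibull-tail contribution $p^{1/r}\|t-s\|_\infty$ must be decoupled and treated separately, exploiting the distributional structure of the random permutation to recombine them into a single weighted $\ell^2$ bound on $T_\pi$.
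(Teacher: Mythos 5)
Your proposal correctly identifies several relevant ingredients (moment bound from Corollary \ref{momenty}, the role of the decreasing rearrangement of the $|X_k|$'s, Paley--Zygmund, the Fernique--Talagrand theorem), but it misses the key structural insight that drives the paper's proof, and this produces a genuine gap on both sides.

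The paper's crucial idea is the \emph{conditionally Gaussian representation}: by Lemma \ref{replem}, a symmetric variable $X$ with tail $\exp(-t^r)$ is comparable (two-sided, up to $L(r)$) to a product $gY$, where $g$ is standard Gaussian and $Y$ is symmetric with tail $\exp(-t^s)$, $1/s + 1/2 = 1/r$. Theorem \ref{rep} then reduces the problem to the process $\sum_k t_{\pi(k)} g_{\pi(k)} Y_k^*$, where $(Y_k^*)$ is the non-increasing rearrangement of the $(|Y_k|)$ (tail $\exp(-t^s)$, order statistics of magnitude $(\log(n/k))^{1/s}$), and the process is honestly Gaussian conditionally on $(Y_k^*)$. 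This is exactly what makes the functional $\E_\pi\,\g_2(T_\pi)$ appear: the Gaussian factor gives Theorem \ref{gauss} after conditioning, and the $(\log(n/k))^{1/s}$ weights arise as the typical sizes of $Y_k^*$. Your proposal never discovers this factorization; instead, you rearrange the $|X_k|$'s directly, which gives order statistics of size $(\log(n/k))^{1/r}$ paired with \emph{Rademacher} signs, not Gaussians.

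This creates a real gap in your lower bound: the step ``a Paley--Zygmund-type estimate on $\epsilon$ conditional on $(Y^*,\sigma)$ provides the required pointwise lower bound for $X_t$, matching $G_t^\pi$'' asserts, in effect, that a Rademacher process dominates a Gaussian process with (up to a power of $\log(n/j)$) comparable weights. But the comparison between Rademacher and Gaussian suprema only goes the other way: for $T = \{e_1,\dots,e_n\}$ one has $\E_\epsilon \sup_t \sum_k \epsilon_k t_k = O(1)$ while $\E_g \sup_t \sum_k g_k t_k \sim \sqrt{\log n}$. To close this gap one must produce an actual Gaussian factor, which is exactly what Lemma \ref{replem} supplies. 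Your upper bound is also incomplete: you correctly identify that for a fixed $\pi$ the term $2^{n/r}\Delta_{d_\infty}$ cannot be absorbed into $\sqrt{2^n}\Delta_{d_2,T_\pi}$ pointwise (the largest coordinate of $t-s$ may land where the weight $(\log(n/k))^{1/s}$ is small), and you flag it as ``the hardest step,'' but you leave it as a heuristic. The paper's route avoids this entirely: after the representation of Theorem \ref{rep} and the truncation Lemma \ref{obcinanie}, it controls the tail $\P(Y_{k_j}^* \ge \tau(\log(n/k_j))^{1/s}) \le (k_j/n)^{\tau^s-1}$ by Markov along a doubly-exponential subsequence $k_j$, integrates by parts, and then applies the contraction principle and Theorem \ref{gauss}. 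I would encourage you to look for a multiplicative (rather than additive) decomposition of the Weibull variable; that is the bridge you are missing.
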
 
The proof is done in three steps. First, we introduce the conditional Gaussian representation, which is crucial. Then we move to considering upper and lower bounds separately.
\subsection{Conditionally Gaussian representation}
It turns out that the investigated process can be represented using r.v.s which are conditionally Gaussian. To obtain this representation, we need the following key lemma.
\begin{lem}\label{replem}
Let $X$ be a symmetric random variable with tail $\exp{(-t^r)}$, $g$ a standard Gaussian variable, and $Y$ symmetric random variable with tail $\exp{(-t^s)}$, where $\frac{1}{s} + \frac{1}{2} = \frac{1}{r}$ and $0< r < 2$. We can then define a probability space and copies of variables $X,Y,g$ for which $g$ is independent of $Y$ and
\begin{align*}
|gY| &\le L(r) (1 + |X|), \\
|X| & \le L(r) (1+ |gY|).
\end{align*}
\end{lem}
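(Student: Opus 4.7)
The plan is to obtain matching two-sided tail estimates for $|gY|$ (with $g$ and $Y$ independent) and then manufacture $X$ from $|gY|$ by a monotone (quantile) coupling. First I will show that for constants depending only on $r$,
\[
c_1(r) \exp\bigl(-C_1(r) t^r\bigr) \le \P\bigl(|gY| > t\bigr) \le c_2(r) \exp\bigl(-C_2(r) t^r\bigr) \qquad (t \ge 1).
\]
The upper bound is a union bound: since $\tfrac{1}{s} + \tfrac{1}{2} = \tfrac{1}{r}$ rearranges to $\tfrac{r}{2} + \tfrac{r}{s} = 1$, one has $\{|g||Y| > t\} \subseteq \{|g| > t^{r/2}\} \cup \{|Y| > t^{r/s}\}$, and each half gives exactly the right exponent. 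For the lower bound I use independence to write $\P(|gY| > t) \ge \P(|g| \ge x^\ast)\,\P(|Y| > t/x^\ast)$ with $x^\ast := (s t^s)^{1/(s+2)}$, the saddle point of $x \mapsto x^2/2 + (t/x)^s$; this choice is forced by $\tfrac{1}{s}+\tfrac{1}{2}=\tfrac{1}{r}$, and with it the combined exponent collapses to $-c(r) t^r$. The polynomial prefactor $1/x^\ast \propto t^{-r/2}$ from the Gaussian lower estimate is absorbed into the exponent for $t \ge 1$ by slightly enlarging $C_1(r)$.

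\textbf{Quantile coupling.} Take $g$, $Y$, and an independent Rademacher $\epsilon$ on a common probability space with the prescribed marginals. Let $G$ denote the c.d.f.\ of $|gY|$ and $F(t) = 1 - \exp(-t^r)$ that of $|X|$; both are continuous and strictly increasing on $(0,\infty)$. Set $U := G(|gY|)$, which is uniform on $(0,1)$, and define
\[
X := \epsilon \, F^{-1}(U).
\]
Then $X$ is symmetric with the correct tail, $g$ and $Y$ remain independent of each other, and $|X|$ is a monotone deterministic function of $|gY|$.

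\textbf{Comparison of quantile functions.} Inverting the tail bounds of the first step yields, for $p$ near $0$,
\[
G^{-1}(1-p) \;\asymp\; \bigl(\log(1/p)\bigr)^{1/r} \;\asymp\; F^{-1}(1-p),
\]
with implicit constants depending only on $r$. Both quantile functions are bounded on any interval $[0, 1-\delta]$, so picking such a $\delta$ gives
\[
F^{-1}(u) \le L(r)\bigl(1 + G^{-1}(u)\bigr), \qquad G^{-1}(u) \le L(r)\bigl(1 + F^{-1}(u)\bigr) \qquad (u \in (0,1)).
\]
Substituting $u = G(|gY|)$ converts these into the pair of almost-sure inequalities in the statement.

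The main obstacle is the lower tail bound of the first step: the saddle-point optimization in $x^\ast$ has to be executed so that the polynomial prefactor from the Gaussian density is absorbed cleanly into the exponent, leaving an estimate of the form $\exp(-C_1(r) t^r)$ that is sharp enough after inversion to match $F^{-1}$ up to a constant factor. Everything downstream is routine.
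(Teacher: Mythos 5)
Your proposal is correct and follows essentially the same route as the paper: a two-sided tail comparison of $|gY|$ against $\exp(-t^r)$ (upper bound by splitting the event at $|g| > t^{r/2}$, lower bound by the product of marginal tails at the same split point, which is exactly your saddle point $x^\ast$ up to a constant factor), followed by the quantile/CDF coupling. The only cosmetic difference is that the paper uses the crude Gaussian lower bound $\P(|g|\ge t)\ge e^{-t^2}$ for $t\ge 2$, which avoids the polynomial prefactor that you instead absorb by enlarging the exponent; your version of the final coupling step is simply a more explicit spelling-out of what the paper phrases as ``we inverse the CDF.''
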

\begin{proof}
For $t > 0$, $\lambda \in [0,1]$ we have
\[
\P(|g| \ge t^\lambda)\P(|Y| \ge t^{(1-\lambda)}) \le \P(|gY| \ge t) \le \P(|g| \ge t^\lambda) + \P(|Y| \ge t^{(1-\lambda)}).
\]
For $t\ge2$ we have
\[
\exp{(-t^2)} \le \P(|g| \ge t) \le \frac{1}{2} \exp{(-t^2/2)}.
\]
Therefore, for $t\ge 2$,
\[
\exp{(-t^{2\lambda})}\exp{(-t^{(1-\lambda)s})} \le \P(|gY| \ge t) \le \frac{1}{2} \exp{(-t^{2\lambda}/2)} + \exp{(-t^{(1-\lambda)s})}.
\]
Take $\lambda = \frac{r}{2}$, then
\[
\exp{(-2t^r)} \le \P(|gY| \ge t) \le 2 \exp{(-t^r/2)}
\]
to finish the proof, we inverse the CDF.
\end{proof}

Let us recall the contraction principle, which will allow us to leverage point inequalities.
\begin{lem}[The Contraction Principle, {\cite[Theorem 4.4]{LT}}]\label{kontrakcja}
Suppose that $T \subset \R^n$ and $|a_k| \le |b_k|$ for $k=1,2,...,n$. Then
\[
\E \sup_{t \in T} \sum_{k=1}^n a_k t_k Z_k \le \E \sup_{t \in T} \sum_{k=1}^n b_k t_k Z_k,
\]
for every sequence of symmetric, independent r.v.s $(Z_k)$.
\end{lem}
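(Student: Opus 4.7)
The plan is to reduce the Weibull supremum to a Gaussian supremum via Lemma \ref{replem}, apply the Gaussian majorizing measure theorem conditionally on the Weibull weights, and then pass from the random weights $|Y_k|$ to their deterministic quantiles $(\log(n/k))^{1/s}$ via the order statistics together with an independent uniform permutation.

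First I would use Lemma \ref{replem} to couple, for each $k$, the variable $X_k$ with a Gaussian--Weibull product $g_k Y_k$ on a common space so that $|X_k| \le L(r)(1+|g_kY_k|)$ and $|g_kY_k| \le L(r)(1+|X_k|)$. Since $X_k$ and $g_kY_k$ are symmetric, I write each as $\varepsilon_k|\,\cdot\,|$ with Rademacher signs $\varepsilon_k$ independent of everything else; the contraction principle (Lemma \ref{kontrakcja}) applied conditionally on $|X_k|, |g_kY_k|$ then converts the pointwise inequalities into
\[
\E \sup_{t \in T} \sum_k t_k X_k \;\asymp_{L(r)}\; \E \sup_{t \in T} \sum_k t_k g_k Y_k,
\]
after absorbing the additive $1$'s via the Rademacher--Gaussian comparison, which bounds $\E\sup_t \sum_k t_k \varepsilon_k$ by a constant times the target quantity (since $(\log(n/k))^{1/s}\ge c>0$ for $k\le n/e$).

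Next, conditioning on $(Y_k)$, the process $\sum_k t_k g_k Y_k$ is centered Gaussian with covariance metric $d_Y(s,t)=(\sum_k (s_k-t_k)^2 Y_k^2)^{1/2}$, so Theorem \ref{gauss} gives $\E_g \sup_t \sum_k t_k g_k Y_k \asymp \g_2(\{(t_k|Y_k|)_k:t\in T\},d_2)$; taking $\E_Y$ yields
\[
\E \sup_{t \in T} X_t \;\asymp_{L(r)}\; \E_Y\, \g_2\bigl(\{(t_k|Y_k|)_k:t\in T\},d_2\bigr).
\]
Because the $Y_k$ are i.i.d., there is a uniform random permutation $\pi$ of $[n]$, independent of the order statistics $(Y_k^*)$, with $|Y_k|=Y_{\pi(k)}^*$. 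Since $d_2$ and $\g_2$ are invariant under coordinate permutations, relabeling gives
\[
\E_Y\, \g_2(\ldots) \;=\; \E_\pi\, \E_{Y^*}\, \g_2\bigl(\{(t_{\pi(j)} Y_j^*)_j : t \in T\},d_2\bigr).
\]

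Finally I would replace $Y_j^*$ by its typical value $(\log(n/j))^{1/s}$. Standard order-statistic estimates for Weibull tails yield constants $c_1(r),c_2(r)$ and an event $E$ with $\P(E^c)$ exponentially small in $n$ on which
\[
c_1(r)\,(\log(n/j))^{1/s} \;\le\; Y_j^* \;\le\; c_2(r)\,(\log(2n/j))^{1/s}\qquad(1\le j\le n);
\]
on $E$, the contraction principle sandwiches $\g_2(\{(t_{\pi(j)}Y_j^*)_j\},d_2)$ between constant multiples of $\g_2(T_\pi)$, while on $E^c$ the contribution is absorbed using uniform tail bounds on $\max_j Y_j^*$ and the invariance of $\g_2$ under coordinate permutations. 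Combining with the previous steps gives the two-sided bound in terms of $\E_\pi\g_2(T_\pi)$.

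The main obstacle is the last step. The upper bound $Y_j^*\le c_2(r)(\log(2n/j))^{1/s}$ follows from a union bound over $j$ for the Weibull tail, but the matching lower bound $Y_j^*\ge c_1(r)(\log(n/j))^{1/s}$ requires controlling, for every dyadic block of $j$'s simultaneously, a binomial lower tail of the form $\#\{i:|X_i|\ge t\}\gtrsim n e^{-t^s}$; care is needed for $j$ close to $n$ where $(\log(n/j))^{1/s}$ vanishes, and for the Rademacher term in Step 1 whose absorption requires that $(\log(n/k))^{1/s}$ is bounded below on a constant fraction of the indices.
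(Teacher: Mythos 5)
Your proposal does not prove the statement it is attached to. The statement is the Contraction Principle (Lemma \ref{kontrakcja}): a deterministic comparison of $\E \sup_{t\in T}\sum_k a_k t_k Z_k$ and $\E \sup_{t\in T}\sum_k b_k t_k Z_k$ when $|a_k|\le|b_k|$. What you have written instead is a sketch of the proof of Theorem \ref{perm}, the main two-sided bound of the paper. Worse, your argument repeatedly \emph{invokes} Lemma \ref{kontrakcja} as a tool (``the contraction principle (Lemma \ref{kontrakcja}) applied conditionally\ldots'', ``on $E$, the contraction principle sandwiches\ldots''), so as a purported proof of that lemma it is circular. Note also that the paper itself offers no proof of this lemma; it is quoted from Ledoux--Talagrand \cite[Theorem 4.4]{LT}.

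If you do want to prove the contraction principle, the standard route is entirely different from anything in your text: by symmetry and independence of the $Z_k$ one may condition on $|Z_k|$ and reduce to Rademacher signs (or simply work with the $Z_k$ directly); then observe that, with all other coordinates fixed, the map $c \mapsto \E\sup_{t\in T}\bigl(c\, t_1 Z_1 + \sum_{k\ge 2} b_k t_k Z_k\bigr)$ is convex and, by symmetry of $Z_1$, even in $c$, hence non-decreasing in $|c|$; this lets you replace $a_1$ by $b_1$ without decreasing the expectation, and one iterates over the coordinates. None of the machinery you describe (the coupling of Lemma \ref{replem}, the majorizing measure theorem, order statistics of Weibull variables) is relevant to this lemma.
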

The theorem below introduces the aforementioned representation.
\begin{thm}\label{rep}
Suppose that $T \subset \R^n$ and $X_t = \sum_{k=1}^n t_k X_k$, where $(X_k)$ is a sequence of independent, symmetric r.v.s with tails
$\exp{(-t^r)}$ for $0 < r < 2$. Let $\frac{1}{s} + \frac{1}{2} = \frac{1}{r}$ and $(Y_k)$ be a sequence of independent, symmetric r.v.s with tails $\exp{(-t^s)}$. Then, for some constant $L(r)$, which depends only on $r$
\[
\frac{1}{L(r)} \E \sup_{t \in T}  \sum_{k=1}^n t_{\pi(k)} g_{\pi(k)} Y_k^*  \le \E \sup_{t \in T} X_t \le L(r) \E \sup_{t \in T}  \sum_{k=1}^n t_{\pi(k)} g_{\pi(k)} Y_k^*,
\]
where $(g_k)$ is a sequence of standard Gaussian r.v.s, $\pi$ a random permutation, and $(g_k)$, $\pi$, $(Y_k)$ are independent.
\end{thm}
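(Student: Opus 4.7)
The plan is to first reduce the right-hand side of the theorem to a simpler process built from i.i.d.\ sequences, and then to compare that process with $X_t$ via coupling and the contraction principle. Setting $\sigma=\pi^{-1}$ and reindexing by $j=\pi(k)$ rewrites
$$\sum_{k=1}^n t_{\pi(k)} g_{\pi(k)} Y^*_k \;=\; \sum_{j=1}^n t_j\, g_j\, Y^*_{\sigma(j)}.$$
Letting $\rho$ be the (random) permutation that sorts $|Y|$ in decreasing order, so that $Y^*_{\sigma(j)}=|Y_{\rho(\sigma(j))}|$, and noting that $\sigma$ is uniform independent of $Y$, right-composition shows $\rho\circ\sigma$ is also uniform and independent of $Y$; exchangeability of $(|Y_j|)$ then gives $(Y^*_{\sigma(j)})_j\stackrel{d}{=}(|Y_j|)_j$. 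Combined with $g_j|Y_j|\stackrel{d}{=} g_jY_j$ (symmetry of $g_j$), this reduces the theorem to
$$\frac{1}{L(r)}\E\sup_{t\in T}\sum_{j=1}^n t_j g_jY_j \;\le\; \E\sup_{t\in T} X_t \;\le\; L(r)\,\E\sup_{t\in T}\sum_{j=1}^n t_j g_jY_j,$$
with $(g_j),(Y_j)$ independent i.i.d.\ sequences.

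For the upper bound I would invoke Lemma~\ref{replem} coordinatewise to place $(X_k,g_kY_k)_{k=1}^n$ on a single probability space, independent across $k$, with $|X_k|\le L(r)(1+|g_kY_k|)$ pointwise. Introducing an independent Rademacher sequence $(\epsilon_k)$, symmetry gives $\E\sup_t\sum t_kX_k=\E\sup_t\sum\epsilon_k t_k|X_k|$; conditioning on $(|X_k|,|g_kY_k|)$ and applying Lemma~\ref{kontrakcja} with $a_k=|X_k|$, $b_k=L(r)(1+|g_kY_k|)$, $Z_k=\epsilon_k$ yields
$$\E\sup_{t\in T}\sum_{k=1}^n t_kX_k \;\le\; L(r)\,\E\sup_{t\in T}\sum_{k=1}^n \epsilon_k t_k \;+\; L(r)\,\E\sup_{t\in T}\sum_{k=1}^n \epsilon_k t_k|g_kY_k|.$$
Since $\epsilon_k|g_kY_k|\stackrel{d}{=} g_kY_k$ independently across $k$, the second summand equals $L(r)\,\E\sup_t\sum t_kg_kY_k$.

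Absorbing the pure Rademacher process on the right is the one step that needs a genuine idea rather than bookkeeping, and I expect it to be the main obstacle: the additive $1$ in Lemma~\ref{replem} is precisely what forces the detour, so one has to use positivity of a first absolute moment. With $c(r):=\E|g_1Y_1|>0$, the elementary inequality $\E\sup\ge\sup\E$ applied to $\E_{g,Y}$ gives
$$\E\sup_{t\in T}\sum_{k=1}^n \epsilon_k t_k|g_kY_k| \;\ge\; c(r)\,\E\sup_{t\in T}\sum_{k=1}^n \epsilon_k t_k,$$
so the Rademacher process is itself dominated by the target and the upper bound follows. The lower bound runs symmetrically from the mirror inequality $|g_kY_k|\le L(r)(1+|X_k|)$, with $\E|X_1|>0$ replacing $c(r)$ in the final step.
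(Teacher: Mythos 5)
Your proposal is correct and follows essentially the same route as the paper: reduce the randomly permuted, sorted process to the i.i.d.\ process $\sum t_k g_k Y_k$, couple coordinatewise via Lemma~\ref{replem}, symmetrize with Rademachers and apply the contraction principle, and absorb the extra Rademacher term coming from the additive $1$ by the Jensen step $\E\sup\ge\sup\E$ together with the positivity of the first absolute moment. The paper writes the same argument (in the opposite direction first, declaring the other ``analogous''), and your treatment of the sorting/permutation identity is if anything slightly more careful than the paper's one-line justification.
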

\begin{proof}
Note first that for a sequence of independent Rademacher variables $(\epsilon_k)$, we have
\[
 \E \sup_{t \in T}  \sum_{k=1}^n t_{\pi(k)} g_{\pi(k)} Y_k^* =  \E \sup_{t \in T}  \sum_{k=1}^n t_{k} g_k Y_k
= \E \sup_{t \in T}  \sum_{k=1}^n t_{k} \epsilon_k |g_k Y_k|,
\]
\[
\E \sup_{t \in T} X_t = \E \sup_{t \in T}  \sum_{k=1}^n t_{k} X_k 
= \E \sup_{t \in T}  \sum_{k=1}^n t_{k} \epsilon_k |X_k|,
\]
where the first equation follows from the fact that $(Y_1,...,Y_n)$ has the same distribution as $(Y^*_{\pi(1)},...,Y^*_{\pi(n)})$.
Furthermore, using Lemma \ref{replem} we can assume that
\begin{align*}\label{punktowe} \begin{aligned}
|g_kY_k| & \le L(r) (1 + |X_k|),   \\
|X_k| & \le L(r) (1 + |g_kY_k|). 
\end{aligned} \tag{$1$}
\end{align*} \raisetag{1\baselineskip}
Using Jensen's inequality we have
\[
\E_\epsilon \E_X \sup_{t \in T}  \sum_{k=1}^n t_{k} \epsilon_k |X_k| \ge  \E_\epsilon \sup_{t \in T}  \sum_{k=1}^n t_{k} \epsilon_k \E_X |X_k| \ge \frac{1}{L(r)}  \E_\epsilon \sup_{t \in T}  \sum_{k=1}^n t_{k} \epsilon_k. 
\]
Now, we use the contraction principle (Lemma \ref{kontrakcja}) and point inequalities \eqref{punktowe} to see that
\begin{align*}
 \E \sup_{t \in T}  \sum_{k=1}^n t_{\pi(k)} g_{\pi(k)} Y_k^* &= \E \sup_{t \in T}  \sum_{k=1}^n t_{k} \epsilon_k |g_k Y_k| \\
&\le L(r) \left( \E \sup_{t \in T}  \sum_{k=1}^n t_{k} \epsilon_k + \E \sup_{t \in T} \sum_{k=1}^n t_{k} \epsilon_k |X_k| \right)  \\
&\le L(r)  \E \sup_{t \in T} \sum_{k=1}^n t_{k} \epsilon_k |X_k| = \E \sup_{t \in T} X_t.
\end{align*}
The proof of the upper bound is analogous.
\end{proof}

We will also need a lemma that allows us to omit some of the terms in the sum.
\begin{lem}\label{obcinanie}
Let $(a_k)_{k \ge 1}$ be a non-negative, non-increasing sequence and $\theta \in (0,1]$. Then, for $n \ge \frac{2}{\theta}$.
\[
\E \sup_{t \in T} \sum_{k=1}^n t_{\pi(k)} g_{\pi(k)} a_k \le L(\theta) \E \sup_{t \in T} \sum_{k=1}^{\lceil \theta n \rceil} t_{\pi(k)} g_{\pi(k)} a_k 
\]
\end{lem}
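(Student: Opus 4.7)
The plan is to partition $[n]$ into $N:=\lceil n/m\rceil$ consecutive blocks of length at most $m:=\lceil \theta n\rceil$, show that each block's contribution to the supremum is bounded by the head sum $\E \sup_{t\in T}\sum_{l=1}^m t_{\pi(l)} g_{\pi(l)} a_l$, and absorb the factor $N\le \lceil 1/\theta\rceil+1$ into the constant $L(\theta)$. The hypothesis $n\ge 2/\theta$ is used only to ensure $m\ge 2$, keeping the block decomposition non-degenerate.

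Concretely, set $B_j:=\{(j-1)m+1,\dots,\min(jm,n)\}$ for $j=1,\dots,N$. By sub-additivity of the supremum,
\[
\E \sup_{t\in T} \sum_{k=1}^n t_{\pi(k)} g_{\pi(k)} a_k \le \sum_{j=1}^N \E \sup_{t\in T} \sum_{k\in B_j} t_{\pi(k)} g_{\pi(k)} a_k.
\]
For the $j$-th block I would substitute $l = k-(j-1)m$ and reduce the resulting expression to the head sum in three standard moves. First, since $(a_k)$ is non-increasing, $a_{l+(j-1)m}\le a_l$, so conditioning on $\pi$ and applying the contraction principle (Lemma \ref{kontrakcja}) to the symmetric r.v.s $g_{\pi(l+(j-1)m)}$ lets me replace each $a_{l+(j-1)m}$ by the dominating weight $a_l$. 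Second, the uniform random permutation $\pi$ is exchangeable, so $(\pi(l+(j-1)m))_{l=1}^{|B_j|}$ has the same joint distribution as $(\pi(l))_{l=1}^{|B_j|}$, which lets me drop the shift in the indices. Third, padding with zero coefficients for $l=|B_j|+1,\dots,m$ and invoking Lemma \ref{kontrakcja} once more (with $0\le a_l$) bounds the expression by $\E \sup_{t\in T}\sum_{l=1}^m t_{\pi(l)} g_{\pi(l)} a_l$.

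Summing the $N$ identical bounds finishes the proof, giving $L(\theta)=\lceil 1/\theta\rceil+1$. The whole argument is short and none of the steps is genuinely difficult; the only point requiring care is to condition on $\pi$ before invoking the contraction principle, since the effective parameter set $\{(t_{\pi(l+(j-1)m)})_{l=1}^{|B_j|}:t\in T\}$ depends on $\pi$, and to exploit the exchangeability of $\pi$ at the right moment to eliminate the shifts $l\mapsto l+(j-1)m$. Once these two bookkeeping points are observed, the rest is routine and I do not foresee a real obstacle.
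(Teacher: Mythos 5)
Your proof is correct and is essentially the same argument as in the paper, written in the opposite direction: the paper starts from $(l+1)$ copies of the head expectation, uses the distributional invariance of $\pi$ under cyclic index shifts modulo $n$, pulls the sum over shifts inside the supremum, and then applies the contraction principle with the monotonicity of $(a_k)$; you instead start from the full sum, use sub-additivity of the supremum to break it into non-wrapping consecutive blocks, and then apply contraction (twice) and exchangeability of $\pi$ to dominate each block by the head. The only bookkeeping difference is that the paper lets blocks wrap around modulo $n$ while you keep a possibly shorter last block and pad it with zeros; both yield the constant $L(\theta)\asymp 1/\theta$.
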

\begin{proof}
Let $m = \lceil \theta n \rceil$ and $n = lm + r$, where $0 \le r < m$. For an integer $i=0,1,...,l$ and a permutation $\pi$, let $\pi^{i}$ be a permutation, such that $\pi^i(k) = \pi(k + im)$, where the addition is done modulo $n$. Then $\pi \mapsto \pi^i$ is a bijection, and we have
\begin{align*}
(l+1) \E \sup_{t \in T}  \sum_{k = 1}^{m} t_{\pi(k)} g_{\pi(k)} a_k &= \sum_{i=0}^l \E_\pi \E_g  \sup_{t \in T} \sum_{k = 1}^{m} t_{\pi^i(k)} g_{\pi^i(k)} a_k \\
&= \sum_{i=0}^l \E_\pi \E_g  \sup_{t \in T} \sum_{k = 1}^{m} t_{\pi(k+im)} g_{\pi(k+im)} a_k \\
&= \sum_{i=0}^l \E_\pi \E_g  \sup_{t \in T} \sum_{k = im+1}^{(i+1)m} t_{\pi(k)} g_{\pi(k)} a_{k-im} \\
& \ge \E_\pi \E_g  \sup_{t \in T} \sum_{i=0}^l \sum_{k = im+1}^{(i+1)m} t_{\pi(k)} g_{\pi(k)} a_{k-im} \\ 
&\ge \E_\pi \E_g  \sup_{t \in T} \sum_{k = 1}^{n} t_{\pi(k)} g_{\pi(k)} a_k,  
\end{align*}
where in the last inequality we have used the contraction principle (Lemma \ref{kontrakcja}) and monotonicity of $(a_k)$.
\end{proof}

\subsection{Lower bound} 

Let us first recall Paley-Zygmund inequality.
\begin{lem}[Paley-Zygmund, {\cite[Lemma 0.2.1]{SK}}]
Let $Z$ be a non-negative random variable and $\E Z^2 < \infty$. Then, for $0 < \theta < 1$
\[
\P( Z \ge \theta \E Z) \ge (1-\theta)^2 \frac{(\E Z)^2}{\E Z^2}.
\]
\end{lem}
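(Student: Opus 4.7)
The plan is to use the standard one-line argument: split $\E Z$ across the two events $\{Z < \theta \E Z\}$ and $\{Z \ge \theta \E Z\}$, bound the small part deterministically and the large part by Cauchy--Schwarz, then solve for the probability.

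Concretely, I would first write
\[
\E Z = \E\bigl[Z \1_{\{Z < \theta \E Z\}}\bigr] + \E\bigl[Z \1_{\{Z \ge \theta \E Z\}}\bigr].
\]
The first summand is at most $\theta \E Z$ since on the event $\{Z < \theta \E Z\}$ the integrand is strictly smaller than $\theta \E Z$ (and $Z \ge 0$ so the indicator contributes nonnegatively). For the second summand I would apply the Cauchy--Schwarz inequality to the pair $Z$ and $\1_{\{Z \ge \theta \E Z\}}$, yielding
\[
\E\bigl[Z \1_{\{Z \ge \theta \E Z\}}\bigr] \le \sqrt{\E Z^2}\, \sqrt{\P(Z \ge \theta \E Z)}.
\]
The hypothesis $\E Z^2 < \infty$ ensures this bound is meaningful and that $\E Z < \infty$ as well, so the rearrangement below is legitimate.

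Combining these two estimates gives
\[
(1-\theta)\E Z \le \sqrt{\E Z^2}\,\sqrt{\P(Z \ge \theta \E Z)},
\]
and squaring and dividing by $\E Z^2$ produces the stated inequality. There is no real obstacle here; the only mild subtlety is handling the degenerate case $\E Z^2 = 0$ (where $Z = 0$ a.s.\ and both sides are interpreted as $0$ in the natural way, or one simply assumes $\E Z > 0$ for the bound to be nontrivial) and observing that the strict inequality on the small event allows the clean factor $\theta \E Z$ rather than $\theta \E Z$ plus an error term.
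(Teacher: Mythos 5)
Your proof is correct and is the standard argument for the Paley--Zygmund inequality: decompose $\E Z$ over the two events, bound the small part by $\theta \E Z$, apply Cauchy--Schwarz to the large part, and rearrange. The paper itself does not prove this lemma---it is stated with a citation to Kwapie\'n and Woyczy\'nski \cite{SK}---so there is nothing in the paper to compare against; your write-up fills that gap cleanly, including the sensible remark about the degenerate case $\E Z^2 = 0$.
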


We are now in a position to prove the lower bound from Theorem \ref{perm}.
\begin{thm}
In the setting of Theorem \ref{perm}, we have
\[
\frac{1}{L(r)} \E_\pi \g_2(T_\pi) \le  \E \sup_{t \in T} X_t.
 \]
\end{thm}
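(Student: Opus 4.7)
The plan is to invoke Theorem \ref{rep} to reduce to
\[
\E \sup_{t \in T} X_t \;\ge\; \frac{1}{L(r)}\,\E_{\pi,Y,g} \sup_{t \in T} \sum_{k=1}^n t_{\pi(k)} g_{\pi(k)} Y_k^*,
\]
and then to exploit the following convexity. For each fixed permutation $\pi$, the map
\[
F(y) \;:=\; \E_g \sup_{t \in T} \sum_{k=1}^n t_{\pi(k)} g_{\pi(k)} y_k
\]
is convex in $y \in \R^n$, being an expectation of a supremum of functions linear in $y$. Jensen's inequality then gives $\E_Y F(Y^*) \ge F(\bar a)$, where $\bar a_k := \E Y_k^*$, and the task reduces to showing $F(\bar a) \ge (c/L)\,\g_2(T_\pi)$ for every fixed $\pi$.

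Let $a_k := (\log(n/k))^{1/s}$, so that $\g_2(T_\pi) = \g_2(T,\rho_\pi)$ with $\rho_\pi(s,t)^2 = \sum_k (s_{\pi(k)} - t_{\pi(k)})^2 a_k^2$. The Paley--Zygmund lemma, applied to the Binomial count $N_k := \#\{i : |Y_i| \ge c a_k\}$, provides the first-moment control $\bar a_k \ge c'' a_k$ on the range $k \le \lceil n/4 \rceil$: choosing $c$ with $c^s = 1/2$ gives $\E N_k = \sqrt{nk} \ge 2k$ throughout this range, and the elementary estimate $\Var N_k \le \E N_k$ combined with Paley--Zygmund applied to $N_k$ at the threshold $\theta = k/\E N_k$ yields $\P(Y_k^* \ge c a_k) = \P(N_k \ge k) \ge 1/8$; hence $\bar a_k \ge c/8 \cdot a_k$.

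With $\bar a_k \ge c'' a_k \1_{k \le \lceil n/4 \rceil}$ throughout, the contraction principle (Lemma \ref{kontrakcja}) gives
\[
F(\bar a) \;\ge\; c''\, \E_g \sup_{t \in T} \sum_{k=1}^{\lceil n/4 \rceil} t_{\pi(k)} g_{\pi(k)} a_k \;\ge\; \frac{c''}{L}\,\g_2(T,\rho_{\pi,\lceil n/4\rceil}),
\]
where $\rho_{\pi,m}(s,t)^2 := \sum_{k \le m} (s_{\pi(k)} - t_{\pi(k)})^2 a_k^2$ and the last inequality is the Fernique--Talagrand lower bound (Theorem \ref{gauss}). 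Lemma \ref{obcinanie} applied with $\theta = 1/4$ to the non-increasing sequence $(a_k)$, combined with Fernique--Talagrand in both directions, then produces $\g_2(T,\rho_{\pi,\lceil n/4\rceil}) \ge (1/L)\,\g_2(T,\rho_\pi) = (1/L)\,\g_2(T_\pi)$. Averaging over $\pi$ completes the proof.

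The key conceptual step is the convexity of $F$: it obviates the need to construct a simultaneous good event for the order statistics $Y_k^*$, replacing that analytic hurdle with the much milder single-index lower bound $\E Y_k^* \ge c\,a_k$, which Paley--Zygmund delivers cleanly one $k$ at a time. Once this is in place, the contraction principle, Fernique--Talagrand, and the truncation Lemma \ref{obcinanie} assemble in a routine fashion.
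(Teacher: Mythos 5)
Your approach is genuinely different from the paper's, and the central idea is a nice one. The paper constructs, via Paley--Zygmund applied along the doubly-exponential subsequence $k_j = \lceil n/2^{2^j}\rceil$, a single good event $A$ of probability $>1/2$ on which \emph{all} the order statistics $Y_k^*$ simultaneously dominate $\tfrac{1}{L(r)}(\log(n/k))^{1/s}$, then conditions on $A$ and applies contraction pointwise. You instead note that for fixed $\pi$ the conditional Gaussian width $F(y)=\E_g\sup_{t\in T}\sum_k t_{\pi(k)}g_{\pi(k)}y_k$ is convex in the multiplier vector $y$ (a supremum of linear maps, then expectation preserves convexity), apply Jensen to replace $Y^*$ by its coordinatewise mean $\bar a$, and then only need the one-index estimate $\E Y_k^*\ge c\,a_k$, which Paley--Zygmund gives cleanly. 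This trades the union-bound-over-scales bookkeeping ($\sum_j z_j<1/2$, etc.) for a single Jensen step; both yield the same conclusion, but your route is leaner.

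There is, however, one genuine misstep at the end. Lemma~\ref{obcinanie} is an inequality between $\E_\pi\E_g$-averaged quantities; its proof hinges on averaging over the random permutation via a shift argument, and the statement is \emph{false} for a fixed $\pi$. The inequality you assert, $\g_2(T,\rho_{\pi,\lceil n/4\rceil})\ge \tfrac{1}{L}\g_2(T,\rho_\pi)$, does not hold pointwise in $\pi$: take $T=\{0,e_1\}$ and any $\pi$ with $\pi^{-1}(1)>\lceil n/4\rceil$ but $\pi^{-1}(1)<n$; then the truncated metric on $T$ is identically zero while the full one is not. The fix is a simple reordering: after obtaining $F_\pi(\bar a)\ge c''\,\E_g\sup_t\sum_{k\le m}t_{\pi(k)}g_{\pi(k)}a_k$ per $\pi$, integrate in $\pi$ first, invoke Lemma~\ref{obcinanie} for the $\pi$-averaged truncated sum, and only then apply Fernique--Talagrand inside the $\pi$-average. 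Two smaller points: the inequality $\E N_k=\sqrt{nk}\ge 2k$ requires $k\le n/4$, so the good range should be $k\le\lfloor n/4\rfloor$ (or take $\theta$ slightly smaller than $1/4$ in Lemma~\ref{obcinanie}); and, as in the paper, small $n$ (here $n<8$, to meet the hypothesis $n\ge 2/\theta$ of Lemma~\ref{obcinanie}) must be handled separately by a crude bound after enlarging $L(r)$.
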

\begin{proof}
Let us first assume that $n \ge 512$. Using Theorem \ref{rep} it is enough to treat the "conditionally Gaussian" case. Our plan is to prove that $Y_k^* >\frac{1}{L(r)} \log{\frac{n}{k}}$ for all $k$ with decent probability. It turns out, that it is enough to control some (rather sparse) subsequence of $Y_k^*$. Let $m$ be such that $2^{2^m} < n \le 2^{2^{m+1}}$. We now set $k_j = \left\lceil \frac{n}{2^{2^j}} \right\rceil$, $\theta_j = 2^{-2^{j-1}}$ and
\[
u_j = \left( \log{ \frac{\theta_j n}{k_j} } \right)^{1/s},
\]
for $j=0,1,2,...,m$. Note, that
\[
k_j = \theta_j n \exp{(-u_j^s)} =  \theta_j \E \sum_{i=1}^n \1_{\{Y_i \ge u_j\}}.
\]
Let $A= \{Y_{k_3}^* \ge u_3, \dots, Y_{k_{m}}^* \ge u_{m}\}$, we have
\[
\P(A) =  1 - \P( \exists_{m  \ge j \ge 3} \ Y_{k_j}^* < u_j) 
\ge 1 - \sum_{j=3}^{m}  \P( Y_{k_j}^* < u_j) \ge 1 - \sum_{j=3}^{m} z_j,
\]
where the last inequality holds, if we assume that $\P( Y_{k_j}^* < u_j) \le z_j$. Equivalently, we need
\[
1 - \P( Y_{k_j}^* \ge u_j) \le z_j.
\]
We will therefore show that there exist $z_3,...,z_m \ge 0$ such that
\[
\P( Y_{k_j}^* \ge u_j) \ge 1-z_j
\]
and
\[
\sum_{j=3}^{m} z_j < \frac{1}{2}.
\]
Using Paley-Zygmund inequality and the definitions of $k_j$, $u_j$ and $\theta_j$, we see that
\begin{align*}
\P(Y_{k_j}^* \ge u_j) &=  \P \left( \sum_{i=1}^n \1_{\{Y_i \ge u_j\}} \ge k_j\right) = \P \left( \sum_{i=1}^n \1_{\{Y_i \ge u_j\}} \ge \theta_j  \E \sum_{i=1}^n \1_{\{Y_i \ge u_j\}} \right) \\
& \ge (1-\theta_j)^2 \frac{(\E\sum_{i=1}^n \1_{\{Y_i \ge u_j\}})^2}{\E(\sum_{i=1}^n \1_{\{Y_i \ge u_j\}})^2\}}  \\
& \ge (1-2\theta_j) \frac{n^2 \exp(-2u_j^s)}{n \exp(-u_j^s) + n^2 \exp(-2u_j^s)}.
\end{align*}
The function $\frac{x}{1+x}$ is increasing. Therefore, to minimize the right hand side, it is enough to minimize $n \exp(-u_j^s)$. Equivalently, we need $u_j^s$ to be as large as possible. The sequence $u_j^s$ is increasing and for $j \le m$, hence
\[
u_j^s \le u_{m}^s = \log{\left(\theta_{m} \frac{n}{k_{m}}\right)} \le \log{\left(2^{-2^{m-1}}2^{2^{m}}\right)} \le \log{\left(2^{2^{m-1}}\right)} \le \log{\sqrt{n}}.
\]
Therefore
\begin{align*}
\P(Y_{k_j}^* \ge u_j) &\ge (1-2\theta_j)\frac{n}{\sqrt{n} + n} =: 1- z_j, \\
z_j &= \left(1-\frac{n}{\sqrt{n} + n}\right) + 2 \theta_j \frac{n}{\sqrt{n} + n} = \frac{1}{\sqrt{n} + 1} + 2 \theta_j \frac{n}{\sqrt{n} + n}.
\end{align*}
We see that
\[
\sum_{j=3}^m \theta_j = \sum_{j=3}^m 2^{-2^{j-1}} < 2 \cdot 2^{-4} = \frac{1}{8},
\]
and finally
\[
\sum_{j=3}^m z_j < \log_2\log_2 n \frac{1}{\sqrt{n} + 1} + 2 \sum_{j=3}^m \theta_j \frac{n}{\sqrt{n} + n} < \frac{1}{2}.
\]
We argue that on the set $A$
\[
Y_k^* \ge \frac{1}{L(r)} \left(\log\frac{n}{k}\right)^{1/s} \text{\quad for $1 \le k \le k_3$.}
\]
To see this, note that 
\[
2\frac{n}{l} \ge \left\lceil \frac{n}{l} \right\rceil \ge \frac{n}{l} \text{\quad for $1 \le l \le n$,}
\]
thus for $3 \le j \le m$,
we have
\[
\frac{\left(\log{\frac{n}{k_j}}\right)^{1/s}}{\left(\log{\frac{\theta_j n}{k_j}}\right)^{1/s}} \le \frac{\left(\log{2^{2^j}}\right)^{1/s}}{\left(\log{2^{-2^{j-1}} \frac{1}{2} 2^{2^j}}\right)^{1/s}}  
=\left( \frac{2^j}{2^{j} - 2^{j-1} - 1} \right)^{1/s} \le 4^{1/s},
\]
and therefore on the set $A$
\[
Y_{k_j}^* \ge \frac{1}{L(r)} \left(\log\frac{n}{k_j}\right)^{1/s}.
\]
For each $1 \le k \le k_3$, we can find the smallest $k_j$, such that $k_j \ge k$, as above we have
\begin{align*}\label{4s}
\frac{\left(\log{\frac{n}{k_j}}\right)^{1/s}}{\left(\log{\frac{n}{k}}\right)^{1/s}} \le
\frac{\left(\log{\frac{n}{k_j}}\right)^{1/s}}{\left(\log{\frac{n}{k_{j-1}}}\right)^{1/s}} \le \left( \frac{2^j}{2^{j-1} - 1} \right)^{1/s} \le 4^{1/s}
\tag{$2$}
\end{align*} 
and consequently on the set $A$
\[
Y_k^* \ge Y_{k_j}^*  \ge \frac{1}{L(r)} \left(\log\frac{n}{k_j}\right)^{1/s} \ge \frac{1}{L(r)} \left(\log\frac{n}{k}\right)^{1/s}.
\]
Using the contraction principle (Lemma \ref{kontrakcja}), we see that on the set $A$
\[
\E_g \sup_{t \in T} \sum_{k=1}^{k_3} t_{\pi(k)} g_{\pi(k)} Y_k^* 
\ge \frac{1}{L(r)} \E_g \sup_{t \in T}  \sum_{k=1}^{k_3} t_{\pi(k)} g_{\pi(k)} \left(\log\frac{n}{k}\right)^{1/s}.
\]
Using the representation given by Theorem \ref{rep} and the fact that $\P(A) > \frac{1}{2}$
\begin{align*}
\E \sup_{t \in T} X_t 
& \ge  \frac{1}{L(r)} \E_Y \E_\pi \E_g \sup_{t \in T}  \sum_{k=1}^n t_{\pi(k)} g_{\pi(k)} Y_k^*  \\
& \ge \frac{1}{L(r)} \E_Y \left[ \1_A  \E_\pi \E_g \sup_{t \in T} \sum_{k=1}^{k_3} t_{\pi(k)} g_{\pi(k)} Y_k^* \right]\\
& \ge \frac{1}{L(r)} \E_\pi \E_g \sup_{t \in T}  \sum_{k=1}^{k_3} t_{\pi(k)} g_{\pi(k)} \left(\log\frac{n}{k}\right)^{1/s}.
\end{align*}
It is now enough to use Lemma \ref{obcinanie} with $\theta = 2^{-2^3}$ and Theorem \ref{gauss}, to see that
\[
\E \sup_{t \in T} X_t \ge \frac{1}{L(r)} \E_\pi \g_2(T_\pi).
\]
If $n < 512$, we see that all $Y_k^*$ are greater than some $c>0$ on a set of measure at least $1/2$. After possibly increasing the constant $L(r)$, the inequality still holds.
\end{proof}

\subsection{Upper bound}
The proof of the upper bound uses similar methods as the previous one, but is a bit easier.
\begin{thm}
In the setting of Theorem \ref{perm}, we have
\[
 \E \sup_{t \in T} X_t \le L(r) \E_\pi \g_2(T_\pi).
 \]
\end{thm}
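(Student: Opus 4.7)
The plan is to mirror the lower bound proof: reduce to the conditionally Gaussian model via Theorem~\ref{rep}, then compare the random weights $Y_k^*$ to the deterministic weights $a_k := (\log(n/k))^{1/s}$ that appear in the definition of $T_\pi$. The direction here is favorable---one needs an upper bound on $Y_k^*$, which is cheaper than the pointwise lower bound used in the previous subsection. By Theorem~\ref{rep} it suffices to prove
\[
\E_\pi \E_Y \E_g \sup_{t \in T} \sum_{k=1}^n t_{\pi(k)} g_{\pi(k)} Y_k^* \;\le\; L(r)\,\E_\pi \g_2(T_\pi).
\]

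Fix $\pi$ and condition on $Y$. Since $(Y_k^*)$ is non-increasing, Lemma~\ref{obcinanie} applied with a small $\theta = \theta(r) \in (0,1)$ lets me truncate the sum to $k \le K := \lceil \theta n \rceil$ at the cost of a factor $L(\theta)$. The key estimate is that, for $\theta$ small enough,
\[
\E_Y \max_{1 \le k \le K} \frac{Y_k^*}{a_k} \;\le\; L(r).
\]
To prove this I would use the identity $\P(Y_k^* \ge u a_k) = \P(\mathrm{Bin}(n,(k/n)^{u^s}) \ge k)$ together with the Chernoff bound $\P(\mathrm{Bin}(n,p) \ge k) \le (np/k)^k e^{k-np}$. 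A short calculation yields $\P(Y_k^* \ge u a_k) \le \exp(-k(u^s-1)\log(n/k) + k)$; since $\log(n/k) \ge \log(1/\theta)$ for $k \le K$, choosing $\theta$ small makes the tail summable in $k$ and integrable in $u$ past a constant $u_0(r)$, yielding a finite expectation.

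Granting the claim, the contraction principle (Lemma~\ref{kontrakcja}) applied conditionally on $Y$ with $b_k := (\max_{j \le K} Y_j^*/a_j)\, a_k$ gives
\[
\E_g \sup_t \sum_{k \le K} t_{\pi(k)} g_{\pi(k)} Y_k^* \;\le\; \Big(\max_{k \le K} \frac{Y_k^*}{a_k}\Big)\, \E_g \sup_t \sum_{k \le K} t_{\pi(k)} g_{\pi(k)} a_k.
\]
The truncated Gaussian supremum on the right is bounded by the full one $\E_g \sup_t \sum_{k=1}^n t_{\pi(k)} g_{\pi(k)} a_k$, because adding back the independent centered summand $\sum_{k>K} t_{\pi(k)} g_{\pi(k)} a_k$ cannot decrease the expected supremum by Jensen's inequality applied to $\sup$. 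The full Gaussian supremum is equivalent to $\g_2(T_\pi)$ by Theorem~\ref{gauss}. Taking $\E_Y$ with the key estimate, then $\E_\pi$, yields the theorem.

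The main obstacle is the calibration of $\theta$ in the tail estimate for $Y_k^*/a_k$: one needs $\log(n/k)$ bounded away from zero uniformly on $k \le K$---equivalently $\theta$ strictly less than $1$ and chosen depending on $r$---so that the Chernoff bound produces genuine exponential decay in $k$ and in $u^s$. Once $\theta$ is fixed, the remaining steps are a routine assembly of tools already developed: Theorem~\ref{rep}, Lemma~\ref{obcinanie}, Lemma~\ref{kontrakcja}, and Theorem~\ref{gauss}.
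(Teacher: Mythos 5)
Your proposal is correct and follows essentially the same route as the paper: reduce to the conditionally Gaussian model via Theorem~\ref{rep}, truncate with Lemma~\ref{obcinanie}, control the random factor $\max_{k\le K} Y_k^*/a_k$ conditionally on $Y$, contract, and finish with Theorem~\ref{gauss}. The only technical variant is in the control of $\max_k Y_k^*/a_k$: you bound its expectation directly by a Chernoff tail summed over all $k\le K$, whereas the paper bounds $\P(Y_{k_j}^*\ge\tau a_{k_j})$ by first-moment Markov on a doubly exponential grid $k_j=\lceil n/2^{2^j}\rceil$, interpolates to intermediate $k$ via \eqref{4s}, and integrates by parts; one small remark is that your calibration concern about $\theta$ is moot---any fixed $\theta$ bounded away from $1$ (the paper uses $\theta=1/4$) already keeps $\log(n/k)$ bounded below on $k\le K$, so $\theta$ need not depend on $r$.
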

\begin{proof}
Analogously, as in the case of lower bound, it is enough to treat the case, when $n$ is sufficiently large.
Using symmetry, representation from Theorem \ref{rep} and Lemma \ref{obcinanie} we can write
\[
\E \sup_{t \in T} X_t \le L(r) \E  \sup_{t \in T} \sum_{k=1}^n t_{\pi(k)} \epsilon_k g_{\pi(k)} Y_k^*
\le L(r) \E  \sup_{t \in T} \sum_{k\le n/4} t_{\pi(k)} \epsilon_k g_{\pi(k)} Y_k^*.
\]
Our plan is to integrate by parts, therefore we need to know how to bound the corresponding tails. Note, that using the contraction principle (Lemma \ref{kontrakcja}), for $\tau \ge 0$ we have
\begin{align*}
\P_Y \Bigg( \E_\epsilon \sup_{t \in T}  \sum_{k\le n/4} t_{\pi(k)} \epsilon_k  & g_{\pi(k)} Y_{k}^{*}  \ge 4^{1/s} \tau \E_\epsilon \sup_{t \in T} \sum_{k \le n/4} t_{\pi(k)} \epsilon_k g_{\pi(k)} \left(\log{\frac{n}{k}}\right)^{1/s} \Bigg) \\
 & \le \P \left( \exists_{k\le n/4} \  Y_{k}^* \ge 4^{1/s} \tau \left(\log{\frac{n}{k}}\right)^{1/s} \right).
\end{align*}
Once again, we find $m$, such that $2^{2^m} < n \le 2^{2^{m+1}}$, and set
\[
k_j = \left\lceil \frac{n}{2^{2^j}} \right\rceil, u_j = \left( \log{ \frac{n}{k_j} } \right)^{1/s} \text{\quad for $j=0,1,...,m+1$.}
\]
Consider $k \le n/4$ and take the largest $k_j$ such that $k_j \le k$. Using \eqref{4s} and comparing $u_j$ with $u_{j+1}$, we see that
\[
Y_{k}^* \ge 4^{1/s}\tau\left(\log\frac{n}{k}\right) \implies Y_{k_j}^* \ge \tau\left(\log\frac{n}{k_j}\right),
\]
and therefore
\[
\P\left( \exists_{k\le n/4} \ Y_{k}^* \ge 4^{1/s} \tau \left(\log\frac{n}{k}\right)^{1/s}\right)
\le \P\left( \exists_{1\le j \le m + 1} \  Y_{k_j}^* \ge \tau \left(\log\frac{n}{k_j}\right)^{1/s}\right).
\]
Note that
\begin{align*}
\P(Y_{k_j}^* \ge \tau u_j) &= \P\left( \sum_{i=1}^n \1_{\{Y_i \ge \tau u_j \}} \ge k_j \right) \le
 \frac{\E \sum_{i=1}^n \1_{\{Y_i \ge \tau u_j \}}}{ k_j}  \\
& \le  \frac{n\exp{\left(-(\tau u_j)^s\right)}}{k_j} = \frac{n}{k_j} \left(\frac{k_j}{n}\right)^{\tau^s} =  \left(\frac{k_j}{n}\right)^{\tau^s - 1}.
\end{align*}
We have $\frac{k_j}{n} \le 2 \cdot 2^{-2^j}$ for $j \le m$, and $k_{m+1} = 1$. Therefore, 
\[ 
\left(\frac{k_j}{n}\right)^{\tau^s - 1} \le \left(\frac{1}{2^{2^{j-1}}}\right)^{\tau^s - 1} \text{\quad for $\tau \ge 1$, $1 \le j \le m+1$.}
\]
For $\tau \ge 2^{1/s}$, we now have
\begin{align*}
\P\left( \exists_{1 \le j \le m+1}\  Y_{k_j}^*  \ge \tau \left(\log\frac{n}{k_j}\right)^{1/s}\right) & \le  \sum_{j = 1}^{m+1} \P\left( Y_{k_j}^* \ge \tau \left(\log\frac{n}{k_j}\right)^{1/s}\right) \\
& \le \sum_{j=1}^{m+1} \left(\frac{1}{2^{2^{j-1}}}\right)^{\tau^s-1} \\
& \le 2 \left(\frac{1}{2}\right)^{\tau^s-1} = 2^{-\tau^s+2}.
\end{align*}
Integrating by parts, using the contraction principle (Lemma \ref{kontrakcja}) and Theorem \ref{gauss} we finish the proof
\begin{align*}
\E_{g,\pi} & \E_Y \E_\epsilon \sup_{t \in T} \sum_{k\le n/4} t_{\pi(k)} \epsilon_k g_{\pi(k)} Y_k^*  \\ 
& =  \E_{g,\pi} \int_0^{\infty} \P_Y\left(\E_\epsilon \sup_{t \in T} \sum_{k\le n/4} t_{\pi(k)} \epsilon_k g_{\pi(k)} Y_k^* \ge \tau\right) \ d\tau \\
& \le L(r)  \E_{g,\pi}  \E_\epsilon \sup_{t \in T} \sum_{k \le n/4} t_{\pi(k)} \epsilon_k g_{\pi(k)} \left(\log\frac{n}{k}\right)^{1/s} \left( 2^{1/s}+ \int_{2^{1/s}}^{\infty} 2^{-\tau^s + 2} \ d\tau \right) \\
& \le L(r)   \E_\pi \E_g \E_\epsilon \sup_{t \in T} \sum_{k =1}^n t_{\pi(k)} \epsilon_k g_{\pi(k)} \left(\log\frac{n}{k}\right)^{1/s} \\
& \le L(r)  \E_\pi \g_2(T_\pi).
\end{align*}
\end{proof}

By analyzing the proofs, we see that for $r \ge r_0 >0$, we have $L(r) \le L(r_0) < \infty$, in other words the constant $L(r)$ explodes only when $r \to 0^+$.

\section{Corollaries}
\begin{defi}
We say that $T \subset \R^n$ is permutationally invariant if for every permutation $\pi : [n] \longrightarrow [n]$, we have
\[
\left\{ (t_{\pi(1)},...,t_{\pi(n)}) \mid (t_1,...,t_n) \in T \right\} = T.
\]
\end{defi}
\begin{defi}
For $T \subset \R^n$, we define
\[
T^s = \left\{ \left( t_k \left(\log{\frac{n}{k}}\right)^{1/s} \right)_{k=1}^n \mid (t_1,...,t_n) \in T \right\}.
\]
\end{defi}
The theorem below follows immediately from the above definitions and Theorem \ref{perm}.
\begin{thm}
Suppose that $T \subset \R^n$ is permutationally invariant and $X_t = \sum_{k=1}^n t_k X_k$, where $(X_k)$ is a sequence of independent, symmetric r.v.s with tails $\exp{(-t^r)}$. Let $\frac{1}{s} + \frac{1}{2} = \frac{1}{r}$, where $0 < r \le 2$. Then, for some constant $L(r)$, which depends only on $r$
\[
\frac{1}{L(r)} \g_2(T^s) \le \E \sup_{t \in T} X_t \le L(r) \g_2(T^s).
\]
\end{thm}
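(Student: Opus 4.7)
The plan is to reduce immediately to Theorem \ref{perm}; the entire content of the corollary is the observation that permutation invariance of $T$ collapses the \emph{random} set $T_\pi$ from Theorem \ref{perm} to a deterministic one.

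Fix any permutation $\pi$ of $[n]$. By definition,
\[
T_\pi = \left\{ \left( t_{\pi(k)} \left(\log \tfrac{n}{k} \right)^{1/s}\right)_{k=1}^n : (t_1,\ldots,t_n) \in T\right\}.
\]
Since $T$ is permutationally invariant, the map $(t_1,\ldots,t_n) \mapsto (t_{\pi(1)},\ldots,t_{\pi(n)})$ is a bijection of $T$ onto itself, so the sequence $(t_{\pi(k)})_{k=1}^n$ ranges over $T$ as $(t_k)_{k=1}^n$ does. Hence $T_\pi = T^s$ as subsets of $\R^n$, independently of $\pi$, and consequently
\[
\E_\pi \g_2(T_\pi) = \g_2(T^s).
\]
For $0 < r < 2$ the conclusion therefore follows directly by substituting this identity into the two-sided estimate of Theorem \ref{perm}.

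Only the boundary case $r = 2$ (where $1/s = 0$ and $T^s = T$) is not formally covered by Theorem \ref{perm}, but here Theorem \ref{r1} applies and gives
\[
\E \sup_{t \in T} X_t \asymp \g_2(T, d_2) + \g_2(T, d_\infty).
\]
Since $d_\infty \le d_2$ one has $\g_2(T, d_\infty) \le \g_2(T, d_2)$, so the right-hand side is comparable to $\g_2(T, d_2) = \g_2(T^s)$, and the bound is proved in this case as well. There is no substantive obstacle: the corollary is a formal consequence of the machinery already set up, and the only nontrivial point is recognizing that permutational invariance makes the expectation $\E_\pi$ in Theorem \ref{perm} vacuous.
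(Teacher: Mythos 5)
Your reduction matches the paper's: permutation invariance forces $T_\pi = T^s$ for every $\pi$, so $\E_\pi \g_2(T_\pi) = \g_2(T^s)$ and Theorem \ref{perm} gives the result. You also noticed that the corollary's stated range $0 < r \le 2$ strictly exceeds the range $0 < r < 2$ of Theorem \ref{perm}, and patched the endpoint $r=2$ via Theorem \ref{r1} and $d_\infty \le d_2$ — a gap the paper leaves silent — so your proof is correct and slightly more careful than the one-line remark the paper offers.
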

For $r \in [1,2]$ we can use Theorem \ref{r1} and Theorem \ref{perm} to obtain results that appeal only to the geometry of the set.
\begin{thm}
Suppose that $T \subset \R^n$ and $\frac{1}{s} + \frac{1}{2} = \frac{1}{r}$, where $r\in [1,2]$. Then, for some constant $L$
\[
\frac{1}{L} \left\{ \g_2(T,d_2) + \g_r(T,d_\infty) \right\} \le \E_\pi \g_2(T_\pi) \le L \left\{ \g_2(T,d_2) + \g_r(T,d_\infty) \right\},
\]
where
\[
T_\pi = \left\{ \left( t_{\pi(k)} \left(\log{\frac{n}{k}}\right)^{1/s} \right)_{k=1}^n  \mid  (t_1,...,t_n) \in T \right\},
\]
and $\pi$ is a random (uniformly distributed) permutation.

\end{thm}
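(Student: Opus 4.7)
The plan is to chain Theorem \ref{r1} and Theorem \ref{perm}, both of which provide two-sided bounds on the same quantity $\E \sup_{t\in T} X_t$ in the overlapping range $r \in [1,2)$. Fix $r \in [1,2)$, let $(X_k)$ be independent, symmetric r.v.s with tails $\exp(-t^r)$, and set $X_t = \sum_{k=1}^n t_k X_k$. By Theorem~\ref{r1},
\[
\frac{1}{L(r)}\left\{\g_2(T,d_2) + \g_r(T,d_\infty)\right\} \le \E \sup_{t\in T} X_t \le L(r)\left\{\g_2(T,d_2) + \g_r(T,d_\infty)\right\},
\]
while Theorem~\ref{perm}, applied to the same process, gives
\[
\frac{1}{L(r)}\E_\pi \g_2(T_\pi) \le \E \sup_{t \in T} X_t \le L(r)\, \E_\pi \g_2(T_\pi).
\]
Composing these two pairs of inequalities eliminates $\E \sup_t X_t$ and already yields the desired two-sided comparison of $\E_\pi \g_2(T_\pi)$ with $\g_2(T,d_2)+\g_r(T,d_\infty)$, with a constant $L(r)^2$ depending only on $r$.

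To upgrade $L(r)^2$ to the absolute constant $L$ appearing in the statement, I would invoke the remark at the end of Section~2: the constant $L(r)$ of Theorem~\ref{perm} stays bounded as long as $r$ is bounded away from $0$, hence on $r \in [1,2)$ it is dominated by $L(1)$. A parallel uniform bound on the constant of Theorem~\ref{r1} can be extracted from the argument in \cite{TAL_R} on the compact interval $[1,2]$. Together these dominate $L(r)^2$ by a single absolute constant for $r \in [1,2)$.

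The endpoint $r=2$ must be treated separately, since Theorem~\ref{perm} is stated only for $r<2$. At $r=2$ one has $1/s = 0$, so, interpreted as a limit, the weight $(\log(n/k))^{1/s}$ equals $1$ for $1 \le k \le n-1$, while the $k=n$ weight vanishes. Consequently $T_\pi$ is a coordinate permutation of $T$ with its last entry set to zero; since $d_2$ is invariant under coordinate permutations and averaging over $\pi$ shows that zeroing a uniformly random coordinate distorts each $d_2$-distance only by a factor close to $1$, one obtains $\E_\pi \g_2(T_\pi) \asymp \g_2(T,d_2)$. On the other hand $d_\infty \le d_2$ gives $\g_2(T,d_\infty) \le \g_2(T,d_2)$, so the right-hand side of the claimed inequality likewise reduces to a multiple of $\g_2(T,d_2)$, and the two sides agree up to an absolute constant.

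I expect the only genuine obstacle to be the uniform-in-$r$ control of the constants, together with the mildly degenerate boundary case $r=2$; the core chaining step is essentially automatic once both two-sided bounds are available.
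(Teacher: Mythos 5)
Your proposal takes the same route the paper does: the theorem is presented as an immediate consequence of composing Theorem \ref{r1} with Theorem \ref{perm} to eliminate $\E\sup_{t\in T}X_t$, and the paper offers no further proof beyond that remark. Your additional care about uniformity of the constants over $r\in[1,2]$ and the degenerate endpoint $r=2$ (where Theorem \ref{perm} is not formally stated) goes beyond what the paper records, but the core chaining argument is identical.
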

\begin{wn}
Suppose that $T \subset \R^n$ is permutationally invariant and $r \in [1,2]$. Then, for $\frac{1}{s} + \frac{1}{2} = \frac{1}{r}$ and some constant $L$
\[
\frac{1}{L} \left\{ \g_2(T,d_2) + \g_r(T,d_\infty) \right\} \le \g_2(T^s,d_2) \le L \left\{ \g_2(T,d_2) + \g_r(T,d_\infty) \right\}.
\]
\end{wn}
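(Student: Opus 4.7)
The plan is to derive the corollary as an immediate consequence of the preceding theorem by showing that the random quantity $\E_\pi \g_2(T_\pi)$ collapses to the deterministic $\g_2(T^s,d_2)$ under the extra hypothesis of permutational invariance. Once that identification is established, the two-sided bound on $\E_\pi \g_2(T_\pi)$ in terms of $\g_2(T,d_2)+\g_r(T,d_\infty)$ transfers verbatim, with no change of constants.

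To establish the identification, I would fix an arbitrary permutation $\pi$ and trace through the definitions. A generic element of $T_\pi$ has the form $\bigl(t_{\pi(k)}(\log(n/k))^{1/s}\bigr)_{k=1}^n$ for some $t=(t_1,\dots,t_n)\in T$. Setting $u_k = t_{\pi(k)}$, the permutational invariance of $T$ forces the vector $u=(u_1,\dots,u_n)$ to lie in $T$, and as $t$ ranges over $T$ so does $u$ (the map $t\mapsto (t_{\pi(1)},\dots,t_{\pi(n)})$ is a bijection of $T$ onto itself). The generic element rewrites as $\bigl(u_k(\log(n/k))^{1/s}\bigr)_{k=1}^n$, which is precisely a generic element of $T^s$. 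Hence $T_\pi=T^s$ for every $\pi$, so the random variable $\g_2(T_\pi,d_2)$ is in fact the constant $\g_2(T^s,d_2)$, and therefore $\E_\pi \g_2(T_\pi) = \g_2(T^s,d_2)$.

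Substituting this identity into the bound supplied by the preceding theorem yields
\[
\frac{1}{L}\bigl\{\g_2(T,d_2)+\g_r(T,d_\infty)\bigr\} \le \g_2(T^s,d_2) \le L\bigl\{\g_2(T,d_2)+\g_r(T,d_\infty)\bigr\},
\]
as required. The only potential subtlety is bookkeeping about the direction of the permutation (i.e.\ whether one should write $t_{\pi(k)}$ or $t_{\pi^{-1}(k)}$ when invoking invariance), but this is cosmetic because the symmetric group is closed under inversion. In particular, there is no genuine obstacle here: the real work is done by the preceding theorem, and this corollary is just the specialization to permutationally invariant index sets, where the expectation over $\pi$ becomes redundant.
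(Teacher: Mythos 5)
Your proof is correct and matches the paper's intended route: under permutational invariance $T_\pi = T^s$ for every $\pi$, so $\E_\pi \g_2(T_\pi) = \g_2(T^s,d_2)$ deterministically, and the corollary then reads off directly from the preceding theorem's two-sided bound on $\E_\pi\g_2(T_\pi)$.
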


\section{Acknowledgments}

The author would like to express his sincere gratitude to R. Lata{\l}a for encouragement, fruitful discussions and introduction into the topic.

\addcontentsline{toc}{chapter}{References}


\begin{thebibliography}{9}

\bibitem{FER}
X. Fernique.
\newblock {\em Regularit{\'e} des trajectoires des fonctions al{\'e}atoires
  gaussiennes}.
\newblock {\em Ecole d’Et{\'e} de Probabilit{\'e}s de Saint-Flour
  IV—1974}, Springer, 1975.

\bibitem{HMSO}
P. Hitczenko, S.J. Montgomery-Smith,  K. Oleszkiewicz.
\newblock {\em Moment inequalities for sums of certain independent symmetric random
  variables}.
\newblock {\em Studia Math}, 123(1):15--42, 1997.

\bibitem{SK}
S. Kwapien, W.A. Woyczynski.
\newblock {\em Random Series and Stochastic Integrals}.
\newblock Birkh{\"a}user, 1992.

\bibitem{LAT}
R. Lata{\l}a.
\newblock {\em Sudakov minoration principle and supremum of some processes}.
\newblock {\em Geometric \& Functional Analysis GAFA}, 7(5):936--953, 1997.

\bibitem{TTKOCZ_LAT}
R. Lata{\l}a, T. Tkocz.
\newblock {\em A note on suprema of canonical processes based on random variables with regular moments}.
\newblock {\em Electron. J. Probab.} 20 (2015), no. 36, 1-17.

\bibitem{LT}
M. Ledoux, M. Talagrand.
\newblock {\em Probability in Banach Spaces: Isoperimetry and Processes}.
\newblock Springer, 1991.

\bibitem{MEN}
S. Mendelson, G. Paouris
\newblock {\em On generic chaining and the smallest singular values of random matrices with heavy tails}.
\newblock {\em Journal of Functional Analysis}, 262 (2012), 3775-3811.

\bibitem{TAL_G}
M. Talagrand.
\newblock {\em Regularity of gaussian processes}.
\newblock {\em Acta mathematica}, 159(1):99--149, 1987.

\bibitem{TAL_R}
M. Talagrand.
\newblock {\em The supremum of some canonical processes}.
\newblock {\em American Journal of Mathematics}, 283--325, 1994.

\bibitem{TALBOOK}
M. Talagrand.
\newblock {\em Upper and Lower Bounds for Stochastic Processes. Modern Methods and Classical Problems}.
\newblock Sprinver-Verlag, 2014.

\end{thebibliography}
\end{document}